\let\oldmarginpar\marginpar
\renewcommand\marginpar[1]{\-\oldmarginpar[\raggedleft\footnotesize #1]%
{\raggedright\footnotesize #1}}
\theoremstyle{plain}
\newtheorem{thm}[equation]{Theorem}
\newtheorem{lem}[equation]{Lemma}
\newtheorem{cor}[equation]{Corollary}
\theoremstyle{definition}
\newtheorem{defn}[equation]{Definition}
\theoremstyle{remark}
\newtheorem{rem}[equation]{Remark}
\newtheorem{example}[equation]{Example}
\numberwithin{equation}{section}
\newcommand{\R}{\mathbb{R}}
\newcommand{\loc}{\mathrm{loc}}
\renewcommand{\div}{\divop}
\def\le{\leqslant}
\def\leq{\leqslant}
\def\ge{\geqslant}
\def\geq{\geqslant}
\def\phi{\varphi}
\def\rho{\varrho}
\def\vartheta{\theta}
\newcommand{\iprod}{\mathbin{\lrcorner}}
\def\esssup{\operatornamewithlimits{ess\,sup}}
\def\div{\qopname\relax o{div}}
\date{\today}
\begin{document}

\title[]{{Calder\'on-Zygmund estimates for nonlinear equations of differential forms with BMO coefficients}
}

\author{Mikyoung Lee}
  \address{Mikyoung Lee, Department of Mathematics and Institute of Mathematical Science, Pusan National University, Busan
46241, Republic of Korea}
\email{mikyounglee@pusan.ac.kr}

\author{Jihoon Ok}
\address{Jihoon Ok, Department of Mathematics, Sogang University, Seoul 04107, Republic of Korea}
\email{\texttt{jihoonok@sogang.ac.kr}}

\author{Juncheol Pyo}
  \address{Juncheol Pyo, Department of Mathematics and Institute of Mathematical Science, Pusan National University, Busan
46241, Republic of Korea}
\email{jcpyo@pusan.ac.kr}

\thanks{MSC(2020) 35J92, 35B65, 35D30}

\subjclass[2020]{} 
\keywords{differential form, Calder\'on-Zygmund estimate, $p$-Laplacian, discontinuous coefficient}


\begin{abstract} 
We obtain $L^q$-regularity estimates for weak solutions to $p$-Laplacian type equations of differential forms. In particular,
we prove local Calder\'on-Zygmund type estimates for equations with discontinuous coefficients satisfying the bounded mean oscillation (BMO) condition.
\end{abstract}

\maketitle


\section{Introduction}

In this paper we study regularity theory for the following nonhomogenous quasilinear system:
\begin{equation}\label{mainpb}
d^* \big(a(x)|du|^{p-2}du \big)= d^*(F) \quad \textrm{ in }\ \Omega ,
\end{equation}
where $1<p<\infty$, $\Omega$ is 
an open set in $\R^n$ with $n\ge 2$, $a: \Omega \to [\nu,L]$ with $0<\nu\le L$, $u$ is a nonsmooth $\ell$-form from $\Omega$ with an integer $\ell = 0, 1,\dots, n-1$, and  $F\in L^{\frac{p}{p-1}}(\Omega,\Lambda^{\ell+1})$. In this context,  $d$ is the weak exterior derivative and $d^*$ is the Hodge codifferential with respect to $d$ (see next section).

We note that when $\ell = 0$, the exterior derivative $d$ is the gradient $\nabla$ and the Hodge codifferential $d^*$ is the divergence $\mathrm{div}$. (See Example~\ref{example1}.)
In this case, the Calder\'on-Zygmund estimates in the Lebesgue spaces $L^q$ for equations or systems involving the $p$-Laplacian with coefficients are well-known, and extensive research has been conducted on both cases, establishing them as fundamental operators in the analysis of quasilinear elliptic equations and systems. Iwaniec \cite{Iwa83} first obtained Calder\'on-Zygmund type estimate for the $p$-Laplace equation with $p>2$. Later DiBenedetto  and Manfredi \cite{DM93} generalized Iwaniec's result to the system case with $1<p<\infty$, and Kinnunen and Zhou \cite{KZ99} have successfully extended  to anisotropic elliptic equations with  the vanishing mean oscillation (VMO) coefficient. We note that this result is naturally extended to equations with BMO coefficients with small BMO seminorm depending on the exponent $q$.
Recently, in \cite{BDGP22}, Balci, Diening, Giova and Passarelli di Napoli considered degenerated coefficients and obtained a sharp upper bound of the BMO seminorm with respect to the exponent $q$.
 We refer to \cite{AM05,AM07,BW12,CM16} for further results of Calder\'on-Zymund estimates for elliptic and parabolic problems, as well as those with nonstandard growth.

In the realm of the case $\ell \ge 0$ , Uhlenbeck \cite{Uhl77} made significant contributions by exploring the broad scope of the following homogeneous elliptic systems 
\begin{equation}\label{Uhlenbeck}
d^* (g(|\omega|)\omega) =0 , \quad d\omega=0,
\end{equation}
where $\omega$ is an $(\ell+1)$-form, $g(t)\approx |t|^{p-2}$ in some sense with $p \ge 2$. In particular, she proved the H\"older continuity of $\omega$.  As an extension of her result in  the framework of differential forms
on a Riemannian manifold with sufficiently smooth boundary, Hamburger \cite{Ham92}  established global H\"older regularity for more general class of systems with the Neumann and Dirichlet boundary condition. Beck and Stroffolini \cite{BS13} studied partial regularity for anisotropic systems with $p$-growth. We also  refer to \cite{GH04,Ham05} for further research on regularity theory for systems or functionals  related to the system \eqref{Uhlenbeck}.

On the contrary, in \cite{BDS15}, Bandyopadhyay, Dacorogna and Sil explored functionals of the form
$$
\int_\Omega f(du)\,dx
$$
and developed into the existence theory of minimizers corresponding to variational problems, see also \cite{BS16}.   We note that if $f(\xi)=|\xi|^p$ the corresponding Euler-Lagrange equation is
$$
d^* \left(|du|^{p-2}du\right) =0 \quad \text{in }\ \Omega,
$$
which corresponds to \eqref{Uhlenbeck} in the exact case, i.e., $\omega= d u$ for some $\ell$-form $u$, together with $g(t)=t^{p-2}$.  In fact, if the simply-connected domain $\Omega$ and the closed $(\ell+1)$-form $\omega$ are smooth enough, thanks to the well-known Poincar\'e lemma, we can always find an $\ell$-form $u$ such that $\omega=du$. Furthermore, in subsequent research, Sil obtained some regularity results for related problems. In \cite{Sil17}, he studied Calder\'on-Zygmund type estimates for  linear systems of differential forms with $C^{1,\alpha}$ coefficients and obtained $W^{2,p}$-estimates. Moreover, in \cite{Sil19}, he considered the following nonlinear systems of $p$-Laplacian type 
\begin{equation}\label{mainpb1}
d^* \big(a(x)|du|^{p-2}du \big)= f \quad \textrm{ in }\ \Omega,
\end{equation}
and proved that $du$ is continuous if $f\in L^{n,1}_{\loc}(\Omega)$ with $d^*f=0$ and  $a:\Omega\to [\nu,L]$ is Dini continuous.
 This is inspired by the result of Kuusi and Mingione \cite{KMin14} for $p$-Laplace systems.  Note that in this paper, we are interested in regularity theory when the coefficient function $a$ in \eqref{mainpb1} is discontinuous and $f\in L^\gamma$ with $\gamma<n$.   We also refer to \cite{CDS18,FG16,KS23,Sil19-2} for further research on regularity theory for equations with differential forms.

 \subsection{Main result}

 We write  $B_\rho(y)$ for  the open ball in $\R^n$ with center $y\in \R^n$ and radius $\rho>0$. 
 For the sake of simplicity, we write $B_\rho=B_\rho(y)$ if the center is clear from the context.
 For a measurable function $g:U\to \mathbb{R}$ with $U\subset \mathbb{R}^n$, we define  the average of $g$ in $U$ by
 \begin{equation}\label{Ave}
(g)_U:=\fint_{U} g \; dx = \frac{1}{|U|} \int_{U} g \;dx. 
\end{equation}  If we have $\overline{\Omega'} \subset \Omega$  for  a bounded and open set $\Omega'$, we denote it simply as $\Omega' \Subset \Omega$.

The aim of this paper is  to
 derive Calderón-Zygmund type $L^q$-estimates for a local weak solution to the equation \eqref{mainpb}.
Let us introduce the definition of weak solution we are considering in this paper.
 \begin{defn}
  We say that  $u\in W_{\loc}^{1,p}(\Omega,\Lambda^\ell)$ is a local \textit{weak solution} to \eqref{mainpb} if it satisfies the weak formulation 
\begin{equation}\label{weakform}
\int_{B_R} a(x)|du|^{p-2} \langle du, d\phi \rangle \,dx = \int_{B_R} \langle F, d \phi \rangle \,dx 
\end{equation}
for all $\phi\in W_0^{1,p}(B_R,\Lambda^\ell)$ with $B_R\Subset \Omega$.
\end{defn}

\begin{rem} \label{rmk:weakform}
In view of \cite[Proposition 3]{Sil19}, if $u\in W_{\loc}^{1,p}(\Omega,\Lambda^\ell)$ is a weak solution,  then \eqref{weakform} holds
for every function $\phi\in W^{1,p}_{d^*,T}(B_R,\Lambda^\ell) $ with $B_R\Subset \Omega$.
\end{rem}

Furthermore, we impose the condition that the coefficient function $a$ satisfies  the following BMO type condition:
\begin{defn}
We say that $a:\Omega\to \R$ is \textit{$(\delta,R)$-vanishing} with $\delta,R>0$ if 
$$
\sup_{r\in(0,R]}\sup_{B_r\subset \Omega} \fint_{B_r}|a(x)-(a)_{B_r}|\,dx \le \delta.
$$
\end{defn}
This condition means that the BMO seminorm $[a]_{BMO(B_R)} \le \delta$ for every $B_R\subset \Omega$, hence the coefficient function $a$ can be discontinuous.

We now state the main theorem of the paper. 
\begin{thm}\label{mainthm}
Let $u\in W^{1,p}_{\loc}(\Omega,\Lambda^\ell)$ be a local weak solution to \eqref{mainpb}. For every $q>p$, there exists a small $\delta=\delta(n,\ell,p,\nu,L,q)>0$ such that if the coefficient function $a$ is $(\delta,R)$-vanishing for some $R>0$ and $F\in L^{\frac{q}{p-1}}_{\loc}(\Omega,\Lambda^{\ell+1})$, then we have $du\in L^{q}_{\loc}(\Omega,\Lambda^{\ell+1})$ with the estimate: 
for every $B_{2r} \Subset \Omega$ with $2r \le R$,
$$
\fint_{B_r} |du|^{q}\,dx \le c \left(\fint_{B_{2r}} |du|^{p}\,dx \right)^{\frac{q}{p}} + c \fint_{B_{2r}} |F|^{\frac{q}{p-1}}\, dx 
$$
for some $c=c(n,\ell,p,\nu,L,q)>0$. 
\end{thm}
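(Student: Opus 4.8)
The plan is to prove the theorem by a covering/stopping-time argument based on a comparison estimate, following the now-standard scheme for Calderón--Zygmund estimates for $p$-Laplacian type problems (Caffarelli--Peral, Acerbi--Mingione, Byun--Wang), but implemented in the language of differential forms. The ultimate goal is to show that the level sets of $M(|du|^p)$ (restricted to a fixed ball) decay fast enough that $|du|^p$ is actually in $L^{q/p}$; here $M$ denotes the (restricted, uncentered) Hardy--Littlewood maximal operator. The key local ingredient is a \emph{comparison estimate}: given a ball $B_{2\rho}(x_0)\Subset\Omega$, first freeze the $F$ term by solving
$$
d^*\bigl(a(x)|dv|^{p-2}dv\bigr)=0 \quad\text{in }B_{2\rho}(x_0),\qquad v - u \in W^{1,p}_{d^*,T}(B_{2\rho}(x_0),\Lambda^\ell),
$$
and obtain, via testing the difference of the two weak formulations with $\phi=u-v$ and using the monotonicity of the $p$-Laplacian operator (the standard vector inequalities for $|\xi|^{p-2}\xi$), a bound of the form
$$
\fint_{B_{2\rho}}|du-dv|^p\,dx \;\lesssim\; \fint_{B_{2\rho}}|F - (F)_{B_{2\rho}}|^{\frac{p}{p-1}}\,dx \;+\;(\text{higher-order terms in }p>2\text{ handled by Young}).
$$
Then freeze the coefficient: let $w$ solve the constant-coefficient problem $d^*\bigl((a)_{B_\rho}|dw|^{p-2}dw\bigr)=0$ in $B_\rho(x_0)$ with $w-v\in W^{1,p}_{d^*,T}(B_\rho(x_0),\Lambda^\ell)$, and estimate $\fint_{B_\rho}|dv-dw|^p$ in terms of $[a]_{BMO}$ (i.e. the $(\delta,R)$-vanishing constant) times the energy, using Hölder's inequality together with a higher-integrability (reverse Hölder) estimate for $dv$. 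Combining, one gets for all such balls
$$
\fint_{B_\rho}|du-dw|^p\,dx \;\le\; \varepsilon\,\fint_{B_{2\rho}}|du|^p\,dx \;+\; c_\varepsilon \fint_{B_{2\rho}}|F|^{\frac{p}{p-1}}\,dx,
$$
where $\varepsilon\to0$ as $\delta\to0$.

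The second ingredient is the \emph{interior Lipschitz (or at least higher-integrability with a sup bound) estimate for $dw$}: since $w$ solves the homogeneous constant-coefficient $p$-Laplace system of forms, Uhlenbeck-type regularity (as quoted in the introduction via \cite{Uhl77}, in the exact case $\omega=dw$) gives $dw\in L^\infty_{\loc}$ with
$$
\sup_{B_{\rho/2}}|dw|^p \;\le\; c\fint_{B_\rho}|dw|^p\,dx \;\le\; c\fint_{B_{2\rho}}|du|^p\,dx + c\fint_{B_{2\rho}}|F|^{\frac{p}{p-1}}\,dx.
$$
With the comparison function $dw$ in hand satisfying a uniform $L^\infty$ bound, the standard machinery applies: one shows that on the super-level set $\{M(|du|^p)>\lambda\}$ (for $\lambda$ large relative to the total energy plus the $F$-term), a Calderón--Zygmund--Krylov--Safonov type covering lemma, combined with the weak-$(1,1)$ and strong-$(s,s)$ bounds for $M$ applied to the comparison term, yields the density estimate
$$
\bigl|\{M(|du|^p)>A\lambda\}\cap B\bigr| \;\le\; \varepsilon'\,\bigl|\{M(|du|^p)>\lambda\}\cap B\bigr| \;+\;\bigl|\{M(|F|^{\frac{p}{p-1}})>\lambda/c\}\cap B\bigr|
$$
for a fixed constant $A=A(n,p)$ and $\varepsilon'=\varepsilon'(\delta)\to0$. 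Summing $\lambda^{q/p-1}$ times this inequality over a geometric sequence of levels (equivalently, using the layer-cake representation) and absorbing the first term on the right — which is legitimate precisely when $\delta$, hence $\varepsilon'$, is chosen small depending on $q$ — produces $M(|du|^p)\in L^{q/p}_{\loc}$, hence $du\in L^q_{\loc}$, with the asserted quantitative bound after a standard iteration/covering to pass from the a priori estimate (assuming $du\in L^q$) to the full statement; the a priori assumption is removed by truncating $q$ or by a Lipschitz-truncation/approximation argument.

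I expect the main obstacle to be the \emph{interior a priori regularity for the frozen homogeneous problem in the setting of differential forms} — specifically, establishing the $L^\infty_{\loc}$ bound and the higher-integrability (reverse Hölder inequality) for $dv$ and $dw$ when $v,w$ solve $d^*(a(x)|d\cdot|^{p-2}d\cdot)=0$ with $d(d\,\cdot)=0$ built in. In the gradient case ($\ell=0$) these are classical; for general $\ell$ one must invoke Uhlenbeck's regularity for systems of the type \eqref{Uhlenbeck} in the exact form $\omega=dv$, verify that Gehring's lemma goes through using the Gaffney--Sobolev/Poincaré inequalities for forms with vanishing tangential (or normal) part — this is where Remark~\ref{rmk:weakform} and the test-function class $W^{1,p}_{d^*,T}$ are essential, since one cannot simply subtract constants as in the scalar case but must instead use a Hodge-type decomposition or the Poincaré lemma to build admissible test forms. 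The two comparison steps and the covering argument are then essentially routine adaptations of the scalar theory, with $\fint|F-(F)_{B}|^{p/p-1}$ playing its usual role and all the nonlinear algebra (monotonicity, the $V$-function $V(\xi)=|\xi|^{(p-2)/2}\xi$) carrying over verbatim since it only involves the pointwise norms $|d u|$, $|dv|$ on the exterior algebra $\Lambda^{\ell+1}$.
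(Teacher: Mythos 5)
Your outline is correct in substance, but it follows a genuinely different route from the paper. You run the maximal-function good-$\lambda$ scheme of Caffarelli--Peral/Byun--Wang \cite{CP98,BW12}: a two-step comparison (first drop $F$ keeping $a(x)$, then freeze the coefficient), an $L^\infty$ bound for the constant-coefficient comparison map, a density decay estimate for level sets of $M(|du|^p)$ via weak-$(1,1)$ bounds, and a layer-cake summation with an a priori truncation. The paper instead uses the maximal-function-free exit-time argument of Acerbi--Mingione \cite{AM07}: a Vitali covering of the super-level sets of $|du|^p$ by stopping-time balls, a \emph{single} comparison directly between $u$ and the solution $h$ of the constant-coefficient homogeneous problem with boundary datum $u$ in $W^{1,p}_{d^*,T}$ (Lemma~\ref{lem:comparision}), where the coefficient oscillation term is absorbed using the higher integrability of $du$ itself (Lemma~\ref{lem:high}, proved via the Sobolev--Poincar\'e inequality for forms with the closed form $u_{B_{2r}}$ in place of a constant, plus Gehring), and then Fubini over levels applied to the truncation $|du|^p_k$ together with the iteration lemma on annuli (Lemma~\ref{teclem}); the sup bound you need for the frozen problem is exactly Lemma~\ref{lem:locbdd_h}, taken from Hamburger \cite{Ham92}, which covers all $1<p<\infty$ (Uhlenbeck's theorem alone is stated for $p\ge 2$). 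Your route buys the familiar good-$\lambda$ machinery at the cost of mapping properties of $M$ and an extra intermediate solution $v$ for which you must reprove interior higher integrability (the same Caccioppoli/Sobolev--Poincar\'e-for-forms argument as Lemma~\ref{lem:high} with $F\equiv 0$); the paper's route avoids $M$ entirely and needs only one comparison, at the cost of the stopping-time bookkeeping and the truncation/absorption step. Two small corrections to your sketch: the extra energy terms in the first comparison arise in the singular range $1<p<2$ (not $p>2$), and subtracting $(F)_{B_{2\rho}}$ is legitimate but unnecessary --- the constant form integrates to zero against $d\phi$ for admissible $\phi$, and the paper simply keeps $F$. Neither point affects the viability of your approach.
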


\begin{rem}
In the above theorem, if the local weak solution $u$ is coclosed, i.e., $d^* u=0$, then by Lemma~\ref{lem:Gaffneylocal}, we have $\nabla u\in L^{q}_{\loc}(\Omega,\Lambda^{\ell+1})$
with the estimate 
$$
\fint_{B_r} |\nabla u|^{q}\,dx \le c \left(\fint_{B_{2r}} |\nabla u|^{p}\,dx \right)^\frac{q}{p} + c \fint_{B_{2r}} |F|^{\frac{q}{p-1}}\, dx.
$$
\end{rem}

 \begin{rem}
We can obtain the same result as in the above theorem for the system case with $u=(u^1,u^2,\dots,u^N)\in W^{1,p}_{\loc}(\Omega;\Lambda^{\ell}\otimes\R^N)$ and $F=(F^1,F^2,\dots,F^N)\in W^{1,p}_{\loc}(\Omega;\Lambda^{\ell+1}\otimes\R^N)$.  Its proof is analogous to the one of the theorem.
 \end{rem}

From the above theorem, we can also obtain Calder\'on-Zygmund type estimates for the equations with non-codifferential data $f$ as in \eqref{mainpb1}. 

\begin{cor}\label{cor1}
Let $1<p<\infty$ and
$$
\gamma_0 :=
\left\{\begin{array}{cl}
\displaystyle \frac{np}{np-n+p} & \text{if }\ 1<p<n,\\
1& \text{if } \ p \ge n.
\end{array}\right.
$$
For every $\gamma \in (\gamma_0,n)$, there exists a small $\delta=\delta(n,\ell,p,\nu,L,\gamma)>0$ such that if the coefficient function $a$ is $(\delta,R)$-vanishing for some $R>0$, $f\in L^\gamma_{\loc}(\Omega,\Lambda^\ell)$ with  $d^* f=0$ in the distribution sense, and $u\in W^{1,p}_{\loc}(\Omega)$ is a weak solution to \eqref{mainpb1}, then we have  $du\in L^{\frac{n\gamma (p-1)}{n-\gamma}}_{\loc}(\Omega,\Lambda^{\ell+1})$ with the estimate: for every $B_{2r}\Subset\Omega$ with $2r \le R$,
$$
\fint_{B_r} |d u|^{\frac{n\gamma (p-1)}{n-\gamma}}\,dx \le c \left(\fint_{B_{2r}} |d u|^{p}\,dx \right)^{\frac{n\gamma (p-1)}{(n-\gamma)p}} + c \fint_{B_{2r}} |rf|^{\frac{n\gamma}{n-\gamma}}\, dx 
$$
for some $c=c(n,\ell,p,\nu,L,\gamma)>0$.
\end{cor}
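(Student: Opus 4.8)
The plan is to deduce the corollary from Theorem~\ref{mainthm} by converting the non-codifferential datum $f$ into a codifferential datum $F = d^*G$ of the form appearing in \eqref{mainpb} and then choosing the exponent $q$ appropriately. The key observation is that $f \in L^\gamma_{\loc}$ with $d^* f = 0$ is exactly the structure needed to solve, at least locally, a Hodge-type system: on a ball $B_{2r} \Subset \Omega$ one solves for an $(\ell+1)$-form $G$ the equation $d^* G = f$ (with, say, $dG = 0$ and an appropriate tangential boundary condition so that $G \in W^{1,\gamma}$) using the Gaffney-type inequality and the Hodge decomposition recalled in the preliminaries; by Sobolev embedding this yields $G \in L^{\gamma^*}(B_{2r}, \Lambda^{\ell+1})$ with $\gamma^* = \frac{n\gamma}{n-\gamma}$, together with the scaling-invariant estimate $\bigl(\fint_{B_{2r}} |G|^{\gamma^*}\bigr)^{1/\gamma^*} \le c\, r \bigl(\fint_{B_{2r}} |f|^\gamma\bigr)^{1/\gamma}$. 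Then $u$ is a local weak solution to \eqref{mainpb} with this $F := G$ (in the weak formulation $\int \langle F, d\phi\rangle = \int \langle d^*G, \phi\rangle = \int \langle f, \phi\rangle$ after an integration by parts justified by Remark~\ref{rmk:weakform}, using $d^* f = 0$).

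Next I would match exponents. Theorem~\ref{mainthm} requires $F \in L^{q/(p-1)}_{\loc}$ and delivers $du \in L^q_{\loc}$; here we have $F = G \in L^{\gamma^*}_{\loc}$, so the natural choice is $q := \gamma^*(p-1) = \frac{n\gamma(p-1)}{n-\gamma}$. One then checks the hypothesis $q > p$ of Theorem~\ref{mainthm}: this is precisely the condition $\frac{n\gamma(p-1)}{n-\gamma} > p$, which rearranges to $\gamma > \frac{np}{np-n+p}$ when $p < n$ and is automatic (for any $\gamma > 1$) when $p \ge n$ — this is exactly the definition of $\gamma_0$ and the restriction $\gamma \in (\gamma_0, n)$. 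With this $q$, the smallness threshold $\delta = \delta(n,\ell,p,\nu,L,q) = \delta(n,\ell,p,\nu,L,\gamma)$ from Theorem~\ref{mainthm} gives the required $\delta$, and the conclusion $du \in L^{q}_{\loc} = L^{n\gamma(p-1)/(n-\gamma)}_{\loc}$ follows.

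Finally I would assemble the quantitative estimate. Applying Theorem~\ref{mainthm} on $B_{2r} \Subset \Omega$ (shrinking radii slightly if needed so that the ball on which $G$ is constructed contains $B_{2r}$, or more cleanly solving the Hodge system on a fixed dilate and absorbing constants) gives
$$
\fint_{B_r} |du|^{q}\,dx \le c \left(\fint_{B_{2r}} |du|^{p}\,dx\right)^{q/p} + c \fint_{B_{2r}} |G|^{q/(p-1)}\,dx,
$$
and since $q/(p-1) = \gamma^* = \frac{n\gamma}{n-\gamma}$, the last term is $c \fint_{B_{2r}} |G|^{\gamma^*}\,dx \le c\, r^{\gamma^*} \fint_{B_{2r}} |f|^{\gamma^*\gamma/\gamma^*}$... more precisely, raising the scaling estimate for $G$ to the power $\gamma^*$ and using $\frac{n\gamma}{n-\gamma} \cdot \frac{1}{\gamma} \cdot (n-\gamma) $ bookkeeping, one gets $\fint_{B_{2r}} |G|^{\gamma^*} \le c \fint_{B_{2r}} |rf|^{n\gamma/(n-\gamma)}$, which is exactly the stated right-hand side. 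The main obstacle I anticipate is the first step: constructing $G$ with $d^*G = f$ locally on a ball with good (Sobolev-sharp, scale-invariant) estimates and a boundary condition compatible with the class $W^{1,p}_{d^*,T}$ used in Remark~\ref{rmk:weakform}, so that the integration by parts $\int \langle d^*G, \phi\rangle = \int \langle G, d\phi\rangle$ is legitimate for all admissible test forms; this requires care with the Hodge/Gaffney machinery on balls but is standard, and the rest is elementary exponent arithmetic and rescaling.
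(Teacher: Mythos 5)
Your proposal is correct and follows essentially the same route as the paper: construct an $(\ell+1)$-form potential $F$ with $d^*F=f$ on the ball (the paper takes $F=d\theta$, where $\theta$ is the $W^{2,\gamma}$ solution of $d^*(d\theta)=f$ with zero boundary data, citing Sil's existence theorem), obtain by Sobolev embedding and rescaling the scale-invariant bound $\bigl(\fint_{B_{2r}}|F|^{\gamma^*}\,dx\bigr)^{1/\gamma^*}\le c\bigl(\fint_{B_{2r}}|rf|^{\gamma}\,dx\bigr)^{1/\gamma}$ with $\gamma^*=\frac{n\gamma}{n-\gamma}$, and then apply Theorem~\ref{mainthm} with $q=(p-1)\gamma^*$, the restriction $\gamma>\gamma_0$ being exactly $q>p$. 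The only difference is that where you sketch the construction of $G$ via Hodge/Gaffney machinery and call it standard, the paper makes it precise by invoking the $W^{2,\gamma}$ solvability result of \cite{Sil17}; your exponent arithmetic and scaling bookkeeping match the paper's.
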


We note in the above corollary that the conditions $f\in L^\gamma(\Omega)$ with $\gamma> \gamma_0$ and $d^* f=0$ in the distribution sense are natural from the existence of the weak solution, see \cite[Section 2.5]{Sil19} for more details.


 
 In the proof of the main theorem, we employ a perturbation technique widely used in the field of partial differential equations.  
 In particular, in the main part of the proof in Section~\ref{sec:proof}, we adopt the approach initially introduced by Acerbi and Mingione in \cite{AM07}, 
 where they proved Calder\'on-Zygmund type estimates for parabolic $p$-Laplacian type systems.
 We note that there have been other approaches to obtain Calder\'on-Zygmund estimates for nonlinear problems, employing maximal operators and covering arguments based on dyadic decomposition, see \cite{CP98,DM93,Iwa83}. In contrast, the method used in  \cite{AM07} takes advantage of more elementary tools such as Vitali covering lemma and density of supper-level sets without the need for maximal operators. Therefore it can be applied to a wide range of generalized problems.
 In detail, we employ exit time argument to a nonlinear functional with respect to $du$ and $F$ and use the Vitali covering lemma to create an appropriate collection of balls that covers the supper-level set for $|du|^p$. Subsequently, within each ball, we make comparisons between the exterior derivatives of the local weak solutions for both our main equation \eqref{mainpb} and its corresponding limiting equation which is homogeneous and has constant coefficient. 
 A critical facet of this stage involves achieving higher integrability for the exterior derivatives of the local weak solutions to our problem. This is accomplished in  Lemma~\ref{lem:high}  through the suitable application of the Sobolev-Poincar\'e inequality for differential forms. Within this process, the careful selection of a test function from the appropriate function spaces within the weak formulation assumes a vital role.  Then we take advantage of the known regularity result of the homogeneous system stated in Lemma~\ref{lem:locbdd_h}.

 \subsection{Examples}
 We shall introduce several cases of \eqref{mainpb}. Note that $u$ is the $\ell$-form and $F$ is the $(\ell+1)$-form.  In the following examples, we shall identify $1$-forms with vector fields in $\R^n$ by taking their dual. Using the Hodge star operator we identify $(n-1)$-forms with vector fields in $\R^n$ and $n$-forms with scalar functions.

 \begin{example}\label{example1}  ($p$-Laplacian type equation)
Suppose $\ell=0$.  Then $d u= \nabla u$ and $d^* \omega = \mathrm{div} \omega$ for every $1$-form $\omega$. Therefore \eqref{mainpb} becomes
$$
\div (a(x) |\nabla u|^{p-2} \nabla u) = \div (F) 
\quad \text{in }\ \Omega.
$$ 
 \end{example}

 \begin{example}  (Divergence equation)
Suppose $\ell=n-1$.  Then $d u= \div u$ and  $d^* \omega = \nabla \omega$ for every $n$-form $\omega$. Therefore \eqref{mainpb} becomes
$$
\nabla (a(x) |\div u|^{p-2}\div u) = \nabla F
\quad \text{in }\ \Omega,
$$
which implies that 
$$
a(x) |\div u|^{p-2}\div u  = F +c
\quad \text{in }\ \Omega
$$
for some constant $c$.
 \end{example}

 \begin{example}  (Maxwell type equation)
Suppose $n=3$ and $\ell=1$.  Note that in this case, $1$-forms and $2$-forms can be identified with vector fields in $\R^3$. Then $d u= \nabla \times u$ and  $d^* \omega = \nabla \times \omega$ for every $2$-form $\omega$. Therefore \eqref{mainpb} becomes
$$
\nabla \times (a(x) |\nabla \times u|^{p-2}\nabla \times u) = \nabla \times F
\quad \text{in }\ \Omega.
$$
 \end{example}

\subsection*{Oragnization} 
 The remainder of this paper is organized as follows.
  The subsequent section is dedicated to gathering fundamental definitions related to the theory of differential forms. Additionally, we recall crucial auxiliary lemmas concerning the Sobolev-Poincar\'e inequality for differential forms and the local boundedness of the exterior derivative of weak solutions to the homogeneous problem with constant coefficients. In Section~\ref{Sec_last}, we provide the comparison estimates together with the higher integrability of the exterior derivative of weak solutions to our problem. Our main theorem is proved in the last section.


\section{Preliminaries}
\label{sec:preliminaries}

\subsection{Notation and Sobolev spaces for differential forms}

Let $0\le \ell, k \le n$ be integers and let $\Omega\subset \mathbb{R}^n $ be open, bounded and smooth with $n\ge 2$.
In what follows, we denote by $\Lambda^\ell(\mathbb{R}^n)$ the set of all exterior $\ell$-forms over $\mathbb{R}^n$, or for simplicity by $\Lambda^\ell $.   We denote by $C^{\infty}(\Omega, \Lambda^\ell)$ and $C^{\infty}_0(\Omega, \Lambda^\ell)$ the space of smooth $\ell$-forms and the space of smooth $\ell$-forms with compact support in $\Omega$, respectively. Note that $C^\infty(\R^n,\Lambda^0) = C^{\infty}(\mathbb{R}^n)$.

We start by reviewing fundamental notation for smooth $\ell$-forms.
The exterior product of $\omega \in \Lambda^\ell$ and $\phi \in \Lambda^k$, denoted by $\omega \wedge \phi$, is the $(\ell+k)$-form defined by
\[
(\omega \wedge \phi) (X_1, \dots, X_{\ell+k}) = \sum \textrm{sign} (i_1,\dots,i_k,j_1,\dots,j_\ell) \omega(X_{i_1},\dots,X_{i_\ell})\phi(X_{j_1},\dots,X_{j_k})
\]
for every $(\ell+k)$-tuple vector fields $(X_1, \dots, X_{\ell+k})$ in $\mathbb{R}^n$, where the summation is over permutations $(i_1,\dots,i_k,j_1,\dots,j_\ell)$ of $(1,2,\dots, k+\ell)$ with $i_1<\cdots<i_\ell$ and  $j_1<\cdots<j_k.$

Let $\{e_i\}_{i=1}^n$ be orthonormal vector fields in $\mathbb{R}^n$ and  $\{e^i\}_{i=1}^n$ be their dual 1-forms in the sense $e^i(e_j) =\delta_{ij}$. Note that $\{e^{i_1}\wedge\cdots \wedge e^{i_\ell} : 1\le i_1 < \cdots < i_\ell \le n \}$ is a basis of $\Lambda^\ell$ whose dimension is ${n}\choose{\ell}$. 
Then any $\omega \in \Lambda^\ell$ can be represented as 
\[
\omega = \sum_{1\le i_1 < \cdots <i_\ell\le n} \omega_{i_1 \cdots i_\ell} e^{i_1} \wedge\cdots \wedge e^{i_\ell}
\]
for uniquely determined $\omega_{i_1 \dots i_\ell}  \in C^{\infty}(\mathbb{R}^n)$ with respect to $\{e_i\}_{i=1}^n$. Moreover,  the coefficients of $\omega$ can be recovered by the formula $\omega_{i_1 \dots i_\ell} = \omega(e_{i_1}, \dots, e_{i_\ell})$.
 The scalar product of $\omega, \phi \in \Lambda^\ell$ with coefficients $\omega_{i_1 \cdots i_\ell}$ and  $\phi_{i_1 \cdots i_\ell}$ is defined as 
\[
\langle \omega, \phi \rangle =\sum_{1\le i_1 < \cdots <i_\ell\le n} \omega_{i_1 \cdots i_\ell} \phi_{i_1 \cdots i_\ell} .
\]  The scalar product is independent of the chosen orthonormal vector fields $\{e_i\}_{i=1}^n$.
We denote 
$
|\omega|^2:= \langle \omega, \omega \rangle.$

The Hodge star operator is the linear map $* : \Lambda^\ell \rightarrow \Lambda^{n-\ell}$ defined by 
\[
\omega \wedge \phi = \langle * \omega, \phi \rangle e^1\wedge \cdots \wedge e^n,
\]
for any $\omega\in \Lambda^\ell$, $\phi \in \Lambda^{n-\ell}$. Then we note that  $ *(e^1 \wedge \cdots \wedge e^n) =1$ and   $*1 = e^1 \wedge \cdots \wedge e^n,$
\[
\omega \wedge (*\phi) = \langle \omega, \phi \rangle e^1\wedge \cdots \wedge e^n, \text{ and } *(*\omega) = (-1)^{\ell(n-\ell)} \omega
\]
for every $\omega, \phi \in \Lambda^\ell$.
The interior product (contraction) of $\omega \in \Lambda^\ell$ with $\phi \in \Lambda^k$, denoted by $\phi \iprod \omega$, is the $(\ell-k)$-form defined by
\[
\phi\iprod \omega = (-1)^{n(\ell-k)} * (\phi \wedge (*\omega)).
\]
 Note that for every $\omega \in \Lambda^\ell, \phi \in \Lambda^{\ell+1}$ and $\theta \in  \Lambda^1$, 
\[
|\theta|^2 \omega = \theta\iprod(\theta\wedge \omega) + \theta\wedge (\theta \iprod \omega)  \text{ and } \ \langle \theta \wedge \omega, \phi \rangle =  \langle \omega,  \theta \iprod   \phi \rangle.
\]

As usual, we denote by $d$ the exterior derivative 
$$ d: C^{\infty}(\Omega, \Lambda^\ell) \rightarrow C^{\infty}(\Omega, \Lambda^{\ell+1})
$$
for which we have a product formula of the form
$$ d(\omega_1 \wedge \omega_2) = d \omega_1 \wedge \omega_2 + (-1)^\ell \omega_1 \wedge d\omega_2,
$$
where $\ell$ stands for the degree of $\omega_1$.

Usually, differential forms are understood as smooth differential
forms, but we will deal with nonsmooth differential forms in this paper, i.e.
a differential $\ell$-form $\omega$ is a measurable function $\omega : \Omega \rightarrow \Lambda^\ell$.
 The exterior product, scalar product, the Hodge star operator, and interior product are naturally defined on nonsmooth differential forms.
Let $\{ dx^i\}_{i=1}^{n}$ be the dual basis of the standard orthonormal basis for $\mathbb{R}^n$ and the Euclidean volume form is denoted by $dx := dx^1 \wedge \cdots \wedge dx^n$.

We denote by $L^p(\Omega, \Lambda^\ell), 1\le p \le \infty$, the Lebesgue space of all (measurable) differential $\ell$-forms $\omega$ on $\Omega$ for which 
\[
\|\omega \|_{L^p(\Omega,\Lambda^\ell)} :=
\begin{cases}
 \bigg(\int_{\Omega} |\omega|^p\,dx\bigg)^{\frac1p}  & \quad \text{if}\ 1\le p <\infty,\\
\esssup_{\Omega} |\omega| & \quad \text{if }\ p=\infty\\
\end{cases}
\]
is finite with norm $|\omega|=\langle \omega,\omega \rangle^{\frac12}$ inherited form the scalar product. For two differential forms $\omega \in L^p(\Omega, \Lambda^\ell)$ and $\varphi \in L^q(\Omega, \Lambda^\ell)$ with $1\le p, q \le \infty, \
\frac1p + \frac1q =1$, their scalar product in the sense of $L^2(\Omega, \Lambda^\ell)$ is finite and given by 
\[
(\omega, \varphi)_{\Omega} : = \int_{\Omega} \omega \wedge * \varphi = \int_{\Omega}   * \varphi \wedge \omega = \int_{\Omega} \langle \omega, \varphi \rangle \, dx.
\]
Analogous to the concept of weak derivatives, we introduce the definition of weak exterior derivatives.
\begin{defn}
For a differential $\ell$-form $\omega \in L_{loc}^{1}(\Omega, \Lambda^\ell)$, we say that
 a differential  $(\ell+1)$-form $\varphi \in L_{loc}^{1}(\Omega, \Lambda^{\ell+1})$ is called the (weak) exterior derivative of $\omega$, denoted by $d\omega$, if  
\[
\int_{\Omega} \eta \wedge  \varphi = (-1)^{n-\ell} \int_{\Omega} d\eta \wedge \omega
\] 
for any $\eta \in C^{\infty}_0(\Omega, \Lambda^{n-\ell-1})$.
\end{defn}
Note that the exterior derivative is unique.
The Hodge codifferential of $\omega \in L_{loc}^{1}(\Omega, \Lambda^\ell) $ is the differential $(\ell-1)$-form 
 defined by 
\[
d^*\omega :=(-1)^{n\ell+1} * d *\omega  \in L_{loc}^{1}(\Omega, \Lambda^{\ell-1}) .
\]
For a differential $\ell$-form $\omega$, we say that $\omega$ is closed if $d\omega =0$, and $\omega$ is coclosed if $d^* \omega =0$. We also say that $\omega$ is exact if $\omega = d\varphi$ for some differential $(\ell-1)$-form $\varphi$, and $\omega$ is coexact if $\omega = d^* \varphi$ for some differential $(\ell+1)$-form $\varphi$. Note that every exact form is closed and every coexact form is coclosed; that is $dd \omega =0$ and $d^*d^* \omega=0$.
  See \cite{CDK12} for the more properties and the integration by parts formula regarding these operators.

We use the standard definitions of the Sobolev spaces $W^{1,p}(\Omega, \Lambda^\ell)$ and the H\"older spaces $C^{0,\alpha}(\Omega, \Lambda^\ell)$ for $\alpha \in [0,1]$
  with the natural norms by requiring that each component belongs to the scalar versions of the corresponding spaces.
We can also consider the space $W^{1,p}_0(\Omega, \Lambda^\ell)$ given as the  closure of $C^{\infty}_0(\Omega, \Lambda^\ell)$
in $W^{1,p}(\Omega, \Lambda^\ell)$.
We use some additional Sobolev type spaces specifically suitable for forms.
We denote by 
\[
W^{d,p}(\Omega, \Lambda^\ell): = \{ \omega \in L^{p}(\Omega, \Lambda^\ell) : d\omega \in L^{p}(\Omega, \Lambda^{\ell+1})\},
\]
and
\[
W^{d^*,p} (\Omega, \Lambda^\ell): = \{ \omega \in L^{p}(\Omega, \Lambda^\ell) : d^*\omega \in L^{p}(\Omega, \Lambda^{\ell-1})\}
\]
the so-called partly Sobolev spaces of differential forms via the operators $d$ and $d^*$ respectively. Both spaces are Banach spaces when equipped with the norms
\[
\| \omega \|_{W^{d, p}} := \| \omega\|_{L^p} + \| d\omega\|_{L^p} \text{ and }  \| \omega \|_{W^{d^*, p}} := \| \omega\|_{L^p} + \| d^*\omega\|_{L^p},
\]
respectively. 
Note that the Sobolev space $W^{1,p}(\Omega, \Lambda^{\ell})$ coincides with the intersection of the two partly Sobolev spaces $W^{d,p}(\Omega, \Lambda^{\ell})$ and $W^{d^*,p}(\Omega, \Lambda^{\ell})$.

 Let $\nu$ be the outward unit normal to $\partial \Omega$ which is denoted $\nu= \sum_{i=1}^n \nu_i e_i$ with the standard basis in $\R^n$, it is identified with the $1$-form $\nu = \sum_{i=1}^n \nu_i dx^i$. For any differential $\ell$-form $\omega$ on $\Omega$, the differential $(\ell+1)$-form $\omega_T := \nu \wedge \omega$ is called the tangential part of $\omega$ on $\partial \Omega$ and the differential $(\ell-1)$-form $\omega_N := \nu \iprod  \omega$ is called the normal part of $\omega$ on $\partial \Omega$, interpreted in the sense
of traces when $\omega$ is discontinuous. 
 Furthermore, the spaces $W^{1,p}_T(\Omega, \Lambda^\ell)$, $W^{1,p}_N(\Omega, \Lambda^\ell)$,   and $W^{1,p}_{d^*, T} (\Omega, \Lambda^\ell)$ are defined as
 \[
W^{1,p}_T (\Omega, \Lambda^\ell): = \{ \omega \in W^{1, p} (\Omega, \Lambda^\ell): w_T = 0 \text{ in } \partial\Omega \},
\]
 \[
W^{1,p}_N (\Omega, \Lambda^\ell): = \{ \omega \in W^{1, p} (\Omega, \Lambda^\ell): w_N = 0 \text{ in } \partial\Omega \},
\]
and
\[
W^{1,p}_{d^*, T} (\Omega, \Lambda^\ell): = \{ \omega \in W^{1, p}_T (\Omega, \Lambda^\ell): d^* \omega = 0 \text{ in } \Omega \}.
\]
 We refer to \cite{CDK12, IL93, ISS99} for a more extensive discussion on Sobolev type spaces of differential forms. 

\begin{lem}\label{lem:Gaffneyglobal}
For every $p \in (1,\infty)$, we have 
\[
\| \omega \|_{W^{1,p}(\Omega)} \le c (\| \omega \|_{L^{p}(\Omega)} +  \| d\omega \|_{L^{p}(\Omega)} + \| d^* \omega \|_{L^{p}(\Omega)} )
\]
for all $\omega \in W_T^{1,p}(\Omega, \Lambda^{\ell}) \cup W_N^{1,p}(\Omega, \Lambda^{\ell})$ and for some constant $c = c(p, \Omega)>0$.
\end{lem}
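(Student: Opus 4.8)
The plan is to reduce the estimate to the classical $L^p$-Calder\'on--Zygmund theory for the Laplacian via the Hodge--Morrey decomposition. First I would recall that on a bounded smooth domain $\Omega$ one has, for $\omega \in W^{1,p}_T(\Omega,\Lambda^\ell)$ (the case $W^{1,p}_N$ being dual to it under the Hodge star, since $*$ is an isometry interchanging tangential and normal boundary conditions, $d$ and $d^*$, and $W^{1,p}_T(\Omega,\Lambda^\ell)$ with $W^{1,p}_N(\Omega,\Lambda^{n-\ell})$), the identity $\Delta = dd^* + d^*d$ acting componentwise. The strategy is: given $\omega$, solve the boundary value problem $\Delta \eta = \omega$ in $\Omega$ with the appropriate (tangential) boundary conditions, so that $\omega = d(d^*\eta) + d^*(d\eta)$; elliptic $W^{2,p}$-regularity for this Dirichlet-type problem for the (componentwise) Laplacian gives $\|\eta\|_{W^{2,p}(\Omega)} \le c\|\omega\|_{L^p(\Omega)}$. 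Then $d\omega = d d^* (d\eta)$ and $d^*\omega = d^* d (d^*\eta)$, and one estimates $\nabla\omega$ in terms of second derivatives of $\eta$ contracted against $d\omega$, $d^*\omega$ and $\omega$ itself, using that the operators $d$, $d^*$ are first order with constant coefficients.

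Concretely, the key steps in order: (i) By density it suffices to prove the estimate for smooth forms $\omega$ satisfying the boundary condition, then pass to the limit. (ii) Write $\omega = d\alpha + d^*\beta + h$ via the Hodge--Morrey--Friedrichs decomposition adapted to the tangential boundary condition, where $h$ is harmonic (i.e. $dh = d^*h = 0$) with vanishing tangential part; the harmonic part lies in a finite-dimensional space on which all norms are equivalent, so it is controlled by $\|\omega\|_{L^p}$ directly. (iii) For the exact and coexact parts, use the regularity estimates for the associated first-order div--curl type systems: the point is that $\alpha$ can be chosen coexact and $\beta$ coclosed, so that $d\alpha$ and $d^*\beta$ each solve a Hodge system $dv = f_1$, $d^*v = f_2$ with prescribed boundary part, for which the Gaffney-type a priori bound $\|v\|_{W^{1,p}} \le c(\|dv\|_{L^p} + \|d^*v\|_{L^p} + \|v\|_{L^p})$ is exactly the classical elliptic estimate for overdetermined first-order systems with constant coefficients and complementing (Lopatinski--Shapiro) boundary conditions. (iv) Assemble: $\nabla\omega$ is controlled by $\nabla(d\alpha) + \nabla(d^*\beta) + \nabla h$, hence by $\|d\omega\|_{L^p} + \|d^*\omega\|_{L^p} + \|\omega\|_{L^p}$.

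The main obstacle is step (iii): verifying that the relevant first-order boundary value problem for differential forms is elliptic in the sense of Agmon--Douglis--Nirenberg with boundary conditions satisfying the complementing condition, which is what upgrades the $L^2$ Gaffney inequality (an easy integration-by-parts computation, using that the boundary term $\int_{\partial\Omega}\langle \nu\wedge\omega, d\omega\rangle$ or its dual vanishes under the tangential/normal condition) to the full $L^p$ range $1<p<\infty$. In practice I would not reprove this from scratch but cite the standard references on $L^p$-theory for the Hodge Laplacian on manifolds with boundary (e.g. the works of Iwaniec--Lutoborski, Iwaniec--Scott--Stroffolini, or Schwarz's monograph), since the statement as given is classical; the role here is simply to record it cleanly in the form needed for the comparison arguments later in the paper. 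Thus the ``proof'' will largely consist of reducing to, and invoking, these known results, with the $L^2$ case spelled out as motivation and the boundary-condition duality $*: W^{1,p}_T \leftrightarrow W^{1,p}_N$ making it enough to treat one of the two spaces.
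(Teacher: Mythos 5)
The paper does not prove this lemma at all: it simply records the statement and cites \cite{ISS99} (Theorem~4.8), which is exactly what your plan ultimately does, so your proposal is consistent with the paper's treatment. (Minor caveat: the preliminary idea of solving $\Delta\eta=\omega$ and then estimating $\nabla\omega$ would be circular as stated, since controlling $\nabla\omega$ would require $W^{3,p}$ bounds on $\eta$ and hence $\omega\in W^{1,p}$ already; but as your final step is to invoke the classical references via the Hodge--Morrey--Friedrichs decomposition and ADN theory, this does not affect the conclusion.)
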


The inequality in the above lemma is often called Gaffney inequality,
which is closely connected to Sobolev spaces and partly Sobolev spaces (see \cite[Theorem~4.8]{ISS99}).
Moreover, we have the following local version of the Gaffney inequality, for which we refer to \cite[Theorem 6]{Sil19}.
\begin{lem}\label{lem:Gaffneylocal}
For every $1<p<q<\infty$, we have 
\[
\fint_{B_r} |\nabla \omega|^{q} \,dx\le c \left(\fint_{B_{2r}} |\nabla \omega|^{p} \,dx\right)^q + c \fint_{B_{2r}} |d \omega|^{q} \,dx +c \fint_{B_{2r}} |d^* \omega|^{q} \,dx
\]
for all $\omega \in W^{1,p}(B_{2r}, \Lambda^{\ell})$ 
 and for some constant $c = c(p, q)>0$.
\end{lem}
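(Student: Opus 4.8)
The plan is to derive this local estimate from the global Gaffney inequality of Lemma~\ref{lem:Gaffneyglobal} on balls, by a cut-off argument, together with a short self-improvement iteration that raises the integrability of $\omega$ from $p$ to $q$; we may of course assume $d\omega,d^{*}\omega\in L^{q}(B_{2r})$, else the right-hand side is infinite. Since the inequality is scale invariant under $\omega(x)\mapsto\omega(rx)$, it is enough to treat $r=1$, i.e.\ to prove
\[
\fint_{B_{1}}|\nabla\omega|^{q}\,dx\le c\Big(\fint_{B_{2}}|\nabla\omega|^{p}\,dx\Big)^{q/p}+c\fint_{B_{2}}|d\omega|^{q}\,dx+c\fint_{B_{2}}|d^{*}\omega|^{q}\,dx
\]
with $c=c(n,\ell,p,q)$ (the first term on the right carrying the scaling-consistent exponent $q/p$). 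I will use repeatedly that for a scalar cut-off $\eta$ one has the Leibniz identities $d(\eta\omega)=d\eta\wedge\omega+\eta\,d\omega$ and $d^{*}(\eta\omega)=\eta\,d^{*}\omega-d\eta\iprod\omega$ (the signs of the lower-order terms are immaterial), hence the pointwise bounds $|d(\eta\omega)|\le|d\eta|\,|\omega|+|\eta|\,|d\omega|$ and $|d^{*}(\eta\omega)|\le|\eta|\,|d^{*}\omega|+|d\eta|\,|\omega|$, and that $\eta\omega$ has compact support, so it lies in $W^{1,t}_{0}\subset W^{1,t}_{T}\cap W^{1,t}_{N}$ of the ambient ball, which is precisely the hypothesis of Lemma~\ref{lem:Gaffneyglobal} there (valid for any exponent $t\in(1,\infty)$).

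First I would fix a finite increasing chain $1=\sigma_{k}<\sigma_{k-1}<\dots<\sigma_{1}<\sigma_{0}=2$ of radii (with $k$ chosen below) and cut-offs $\eta_{j}\in C^{\infty}_{0}(B_{\sigma_{j-1}})$ with $\eta_{j}\equiv1$ on $B_{\sigma_{j}}$ and $|\nabla\eta_{j}|\le c$, and put $\tilde\omega_{j}:=\omega-(\omega)_{B_{\sigma_{j-1}}}$, $v_{j}:=\eta_{j}\tilde\omega_{j}$; since $d,d^{*},\nabla$ annihilate constant forms, $d\tilde\omega_{j}=d\omega$, $d^{*}\tilde\omega_{j}=d^{*}\omega$, $\nabla\tilde\omega_{j}=\nabla\omega$. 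Set $s_{0}:=p$, $t_{j}:=\min(s_{j-1},q)$, and let $s_{j}$ be the Sobolev conjugate of $t_{j}$ (or any fixed finite exponent exceeding $t_{j}$ if $t_{j}\ge n$). Assuming inductively that $\omega\in L^{s_{j-1}}(B_{\sigma_{j-1}})$ — true for $j=1$ because $\omega\in W^{1,p}(B_{2})$ — the Leibniz bounds place $v_{j},dv_{j},d^{*}v_{j}$ in $L^{t_{j}}(B_{\sigma_{j-1}})$, so Lemma~\ref{lem:Gaffneyglobal} applied to $v_{j}$ on $B_{\sigma_{j-1}}$ gives $v_{j}\in W^{1,t_{j}}$ and, writing $E_{j}:=\|\tilde\omega_{j}\|_{L^{t_{j}}(B_{\sigma_{j-1}})}+\|d\omega\|_{L^{t_{j}}(B_{\sigma_{j-1}})}+\|d^{*}\omega\|_{L^{t_{j}}(B_{\sigma_{j-1}})}$,
\[
\|\nabla\omega\|_{L^{t_{j}}(B_{\sigma_{j}})}\le\|v_{j}\|_{W^{1,t_{j}}(B_{\sigma_{j-1}})}\le c\,E_{j}.
\]
Since moreover $v_{j}\in W^{1,t_{j}}_{0}$, the (componentwise) Sobolev embedding gives $\|\omega-(\omega)_{B_{\sigma_{j-1}}}\|_{L^{s_{j}}(B_{\sigma_{j}})}=\|v_{j}\|_{L^{s_{j}}(B_{\sigma_{j}})}\le c\,\|v_{j}\|_{W^{1,t_{j}}(B_{\sigma_{j-1}})}\le c\,E_{j}$, in particular $\omega\in L^{s_{j}}(B_{\sigma_{j}})$, which closes the induction.

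Next I would bound $E_{j}$. For $j=1$, Poincaré gives $\|\tilde\omega_{1}\|_{L^{p}(B_{2})}\le c\|\nabla\omega\|_{L^{p}(B_{2})}$ and Hölder on the bounded ball gives $\|d\omega\|_{L^{p}(B_{2})}+\|d^{*}\omega\|_{L^{p}(B_{2})}\le c\big(\|d\omega\|_{L^{q}(B_{2})}+\|d^{*}\omega\|_{L^{q}(B_{2})}\big)$, so $E_{1}\le cM$ where $M:=\|\nabla\omega\|_{L^{p}(B_{2})}+\|d\omega\|_{L^{q}(B_{2})}+\|d^{*}\omega\|_{L^{q}(B_{2})}$. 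For $j\ge2$, splitting off the difference of averages and using Hölder (together with $t_{j}\le s_{j-1}$ and $t_{j}\le q$),
\[
\|\tilde\omega_{j}\|_{L^{t_{j}}(B_{\sigma_{j-1}})}\le c\,\|\omega-(\omega)_{B_{\sigma_{j-2}}}\|_{L^{s_{j-1}}(B_{\sigma_{j-1}})}\le c\,E_{j-1},\qquad\|d\omega\|_{L^{t_{j}}(B_{\sigma_{j-1}})}+\|d^{*}\omega\|_{L^{t_{j}}(B_{\sigma_{j-1}})}\le c\,M,
\]
hence $E_{j}\le c\,E_{j-1}+c\,M$, and a finite induction gives $E_{j}\le c\,M$ for all $j$. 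Because every step with $s_{j-1}<\min(q,n)$ decreases $1/s_{j-1}$ by exactly $1/n$, after $k=k(n,p,q)$ steps one has $s_{k-1}\ge q$, so $t_{k}=q$ and the displayed estimate at $j=k$ becomes $\|\nabla\omega\|_{L^{q}(B_{1})}\le c\,E_{k}\le c\,M$; raising to the power $q$, passing to integral averages, and undoing the rescaling yields the estimate.

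The one delicate point — essentially the heart of the matter — is an apparent circularity: at the steps where $t_{j}$ exceeds the integrability secured so far, Lemma~\ref{lem:Gaffneyglobal} is stated as an a priori estimate for forms already known to lie in $W^{1,t_{j}}$, while we only control $v_{j},dv_{j},d^{*}v_{j}$ in $L^{t_{j}}$. I would resolve this in the usual way by mollification: with $v_{j}^{\varepsilon}:=v_{j}*\rho_{\varepsilon}$ one has $dv_{j}^{\varepsilon}=(dv_{j})*\rho_{\varepsilon}$ and $d^{*}v_{j}^{\varepsilon}=(d^{*}v_{j})*\rho_{\varepsilon}$, which converge in $L^{t_{j}}$, so Lemma~\ref{lem:Gaffneyglobal} applied to $v_{j}^{\varepsilon}-v_{j}^{\varepsilon'}$ shows that $\nabla v_{j}^{\varepsilon}$ is Cauchy in $L^{t_{j}}$ as $\varepsilon\to0$, whence $v_{j}\in W^{1,t_{j}}$ with the Gaffney bound passing to the limit; equivalently, one invokes the identification of the partly Sobolev space $W^{d,t}\cap W^{d^{*},t}$, with vanishing tangential and normal parts, with $W^{1,t}$. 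Everything else — the Leibniz identities, the componentwise Poincaré and Sobolev inequalities, and the bookkeeping of the finitely many radii and constants — is routine.
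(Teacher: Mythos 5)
Your argument is correct, but it takes a genuinely different route from the paper: for Lemma~\ref{lem:Gaffneylocal} the paper supplies no proof at all, referring instead to \cite[Theorem 6]{Sil19}, whereas you derive the estimate from the global Gaffney inequality of Lemma~\ref{lem:Gaffneyglobal} by cutting off $\omega-(\omega)_{B}$ on a finite chain of intermediate balls, bootstrapping the integrability exponent from $p$ up to $q$ through the Sobolev embedding for the compactly supported forms $v_j\in W^{1,t_j}_0\subset W^{1,t_j}_T$, and disposing of the a priori--estimate circularity by mollification (using that $d$ and $d^*$ commute with convolution), so that Lemma~\ref{lem:Gaffneyglobal} can legitimately be applied at the higher exponents; the Leibniz identities, the swap of subtracted averages in the bound $E_j\le cE_{j-1}+cM$, and the final rescaling all check out. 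What the citation buys is brevity; what your proof buys is a self-contained derivation from material already recorded in Section~\ref{sec:preliminaries}, and it also exposes a typo in the statement: the scale-invariant (and correct) exponent on the first right-hand term is $q/p$, as you wrote and as is used in the remark following Theorem~\ref{mainthm}, while the printed exponent $q$ cannot hold --- take, e.g., $n=2$, $\ell=1$, $\omega=\lambda(x_1\,dx^1-x_2\,dx^2)$ and let $\lambda\downarrow 0$: then $d\omega=d^*\omega=0$, the left-hand side scales like $\lambda^{q}$ and the right-hand side like $\lambda^{pq}$. Two cosmetic points: in the regime $t_j\ge n$ you should fix $s_j\ge q$ (any finite such exponent works, since $W^{1,t_j}_0\hookrightarrow L^{s}$ for all $s<\infty$), so that the iteration visibly terminates with $t_k=q$ after $k=k(n,p,q)$ steps; and the constant also depends on $n$ and $\ell$ (through the Gaffney constants on the finitely many fixed balls $B_{\sigma_j}$ and the interior-product bound), consistent with the paper's implicit convention in writing $c=c(p,q)$.
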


From the next lemma, we can always assume that a local weak solution $u\in W^{1,p}_{\loc}(\Omega,\Lambda^\ell) $ to \eqref{mainpb} is coclosed in any $B_r \Subset \Omega$, i.e., $d^*u=0$ in $B_r$. The proof of the lemma is analogous to the one of \cite[Lemma 4]{Sil19}. For the reader’s convenience, we shall report it.

\begin{lem}\label{lem:gaugefixing} For every $\omega \in W^{1,p}(B_r,\Lambda^\ell)$, there is $\tilde\omega \in W^{1,p}(B_r,\Lambda^\ell)$ such that 
$$
d \tilde \omega =d  \omega 
\quad\text{and}\quad
d^* \tilde \omega =0.
$$
\end{lem}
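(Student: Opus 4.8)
The plan is to construct $\tilde\omega$ by correcting $\omega$ with an exact form, so as to kill its codifferential without changing its exterior derivative. Concretely, I would look for an $(\ell-1)$-form $\psi$ on $B_r$ and set $\tilde\omega := \omega - d\psi$; then automatically $d\tilde\omega = d\omega$ since $dd\psi = 0$, and the requirement $d^*\tilde\omega = 0$ becomes $d^*d\psi = d^*\omega$. To solve this, I would instead pass through the Hodge decomposition on the ball: decompose $\omega = d\alpha + d^*\beta + h$ where $\alpha \in W^{1,p}_T(B_r,\Lambda^{\ell-1})$ is the exact part (with tangential boundary condition), $\beta$ is the coexact part, and $h$ is harmonic. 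Since $B_r$ is a ball, hence contractible, the harmonic fields (with the appropriate boundary condition) vanish, so $\omega = d\alpha + d^*\beta$. Then the natural candidate is $\tilde\omega := \omega - d\alpha = d^*\beta$, which is coexact, hence coclosed, giving $d^*\tilde\omega = 0$; and $d\tilde\omega = d\omega - dd\alpha = d\omega$. The $W^{1,p}$-regularity of $\tilde\omega$ follows because $d^*\beta \in W^{1,p}$ by the $L^p$-theory of the Hodge decomposition (equivalently, by applying the Gaffney-type estimate of Lemma~\ref{lem:Gaffneyglobal} to $\tilde\omega$ on $B_r$: $\tilde\omega$ has vanishing $d^*$, its $d$ is in $L^p$, and its $L^p$-norm is controlled, while $\tilde\omega$ can be arranged to satisfy $\tilde\omega_N = 0$ or $\tilde\omega_T = 0$ on $\partial B_r$).

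In more hands-on terms, following the argument of \cite[Lemma 4]{Sil19}, I would proceed as follows. First, solve the auxiliary boundary value problem for $\psi \in W^{1,p}_T(B_r,\Lambda^{\ell-1})$:
$$
\begin{cases}
d^* d\psi = d^*\omega & \text{in } B_r,\\
\psi_T = 0, \ (d\psi)_N = \omega_N & \text{on } \partial B_r,
\end{cases}
$$
or rather the corresponding weak/variational formulation: minimize $\eta \mapsto \int_{B_r}\big(\tfrac1p|d\eta|^p - \cdots\big)$, or use the linear theory since after the gauge reduction the relevant operator is essentially the Hodge Laplacian restricted to exact forms. The existence and $W^{1,p}$-bounds come from the standard $L^p$-theory for the Hodge system on smooth bounded domains (this is exactly where one invokes that $B_r$ is smooth and the results behind Lemma~\ref{lem:Gaffneyglobal} apply). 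Having obtained $\psi$, set $\tilde\omega := \omega - d\psi$. Then $d\tilde\omega = d\omega$ is immediate, and $d^*\tilde\omega = d^*\omega - d^*d\psi = 0$ by construction. Finally $\tilde\omega \in W^{1,p}(B_r,\Lambda^\ell)$: its $L^p$-norm is bounded by $\|\omega\|_{L^p} + \|d\psi\|_{L^p}$, and $\|d\psi\|_{W^{1,p}}$ is controlled via Lemma~\ref{lem:Gaffneyglobal} applied to $d\psi$ (which has $d(d\psi) = 0$, $d^*(d\psi) = d^*\omega \in L^p$, and satisfies a boundary condition making the lemma applicable), using the a priori estimate from the auxiliary problem.

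The main obstacle is the solvability of the auxiliary Hodge-type system in $W^{1,p}$ together with the correct choice of boundary conditions: one must pick the boundary data for $\psi$ so that (i) the problem is well-posed (the homogeneous problem has only the trivial solution, which uses contractibility of the ball to rule out harmonic fields), (ii) the resulting $\tilde\omega$ lies in a space to which the Gaffney inequality of Lemma~\ref{lem:Gaffneyglobal} applies so that one gets full $W^{1,p}$-control rather than merely control of $d$ and $d^*$, and (iii) no boundary term obstructs $d^*\tilde\omega = 0$. All of this is standard Hodge theory on a smooth bounded domain, and since we only need it on a ball, the topology is trivial; the bookkeeping of which boundary condition ($T$ versus $N$) to impose on $\psi$ versus what it forces on $\tilde\omega$ is the only delicate point, and it is handled exactly as in \cite[Lemma 4]{Sil19}.
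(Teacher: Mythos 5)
Your proposal takes essentially the same route as the paper: correct $\omega$ by an exact form, $\tilde\omega=\omega-d\psi$, where $\psi$ solves $d^*d\psi=d^*\omega$ in $B_r$ with the tangential boundary condition $\nu\wedge\psi=0$ (the paper adds the gauge $d^*\psi=0$ and gets $\psi\in W^{2,p}$, hence $d\psi\in W^{1,p}$, directly from \cite[Theorem 9]{Sil17}, following \cite[Lemma 4]{Sil19}). The only omission is the degenerate case $\ell=0$, where there are no $(\ell-1)$-forms and one simply takes $\tilde\omega=\omega$ since $d^*\omega=0$ by definition.
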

\begin{proof}
If $\ell=0$, then $d^*\omega=0$ by definition, hence choose $\tilde\omega=\omega$. Suppose $\ell\ge 1$. Then, since $d^*\omega\in L^{p}(B_r,\Lambda^{\ell-1})$ and $d^*(d^*\omega)=0$, there exists a unique solution $\theta\in W^{2,p}(B_r,\Lambda^{\ell-1})$ to 
$$
\begin{cases}
d^* (d\theta) = d^* \omega & \quad \text{in }\ B_r,\\
d^* \theta =0 & \quad \text{in }\ B_r,\\
\nu \wedge \theta =0 & \quad \text{on }\ \partial B_r,
\end{cases}
$$
see for instance \cite[Theorem 9]{Sil17}. Therefore, we complete the proof by choosing $\tilde \omega = \omega - d \theta$.
\end{proof}

\subsection{Sobolev-Poincar\'e inequality}

We introduce Sobolev-Poincar\'e type inequality for differential forms, for which we refer to results in \cite[Section 4]{IL93}. Let $0\le \ell \le n-1$ and $1<p<\infty$. We define the operator $T:L^p(B_r,\Lambda^{\ell+1})\to W^{1,p}(B_r,\Lambda^{\ell})$ as in \cite[Proposition 4.1]{IL93} with $D=B_r$. Note that from  \cite[Lemma 4.2]{IL93} we have 
$$
\omega = d (T\omega) + T(d\omega) , \quad \omega\in W^{d,p}(B_r,\Lambda^{\ell+1}).
$$
For $\omega \in W^{d,p}(B_r,\Lambda^{\ell})$, define
\begin{equation}\label{omegaD}
\omega_{B_r} := \omega- T(d\omega).
\end{equation}
Then we have 
$$
d \omega_{B_r} = dd (T\omega) + d(T(d\omega)) - d(T(d\omega)) =0 .
$$
Note that if $\omega$ is a $0$-form, then $\omega_{B_r}$ is the average of $\omega$ in $B_r$ given by \eqref{Ave}.
Now we recall the Sobolev-Poincar\'e inequality for differential forms in \cite[Corollary 4.2]{IL93}.
\begin{lem}\label{lem:sobopoin}
Let $\omega \in W^{1,p}(B_r,\Lambda^{\ell})$ with $1<p< n$ and $0\le \ell \le n-1$, and $\omega_{B_r}  \in W^{1,p}(B_r,\Lambda^{\ell})$. 
Then there exists $c=c(n,\ell,p)>0$ such that
$$
\|\omega-\omega_{B_r}\|_{L^{\frac{np}{n-p}}(B_r)}  \le c \|d \omega \|_{L^p(B_r)}.
$$
\end{lem}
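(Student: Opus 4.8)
The plan is to observe that $\omega-\omega_{B_r}$ is \emph{literally} $T(d\omega)$, and then to establish the $L^p\to L^{\frac{np}{n-p}}$ mapping property of the homotopy operator $T$ by reducing it to the classical Hardy--Littlewood--Sobolev inequality for the Riesz potential of order $1$.

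\emph{Reduction.} By the definition \eqref{omegaD}, $\omega_{B_r}=\omega-T(d\omega)$, whence $\omega-\omega_{B_r}=T(d\omega)$; here $d\omega\in L^p(B_r,\Lambda^{\ell+1})$ because $\omega\in W^{1,p}(B_r,\Lambda^\ell)$. So the lemma reduces to the bound
\[
\|T\eta\|_{L^{\frac{np}{n-p}}(B_r)}\le c(n,\ell,p)\,\|\eta\|_{L^p(B_r)}\qquad\text{for all }\eta\in L^p(B_r,\Lambda^{\ell+1}),
\]
applied with $\eta=d\omega$.

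\emph{Kernel bound and HLS.} For the displayed bound I would use the explicit form of $T$ from \cite[Proposition 4.1]{IL93}, where $T$ is the cone homotopy operator averaged against a fixed cutoff supported in $B_r$, hence an integral operator whose kernel is pointwise dominated by $C(n,\ell)|x-y|^{1-n}$. Therefore $|T\eta(x)|\le C(n,\ell)\,I_1(|\eta|\mathbf 1_{B_r})(x)$ for a.e.\ $x\in B_r$, where $I_1$ denotes the order-$1$ Riesz potential on $\R^n$. Since $1<p<n$, the fractional integration theorem gives $\|I_1 g\|_{L^{\frac{np}{n-p}}(\R^n)}\le C(n,p)\|g\|_{L^p(\R^n)}$; taking $g=|\eta|\mathbf 1_{B_r}$ yields the displayed estimate with $c=c(n,\ell,p)$. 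That $c$ is independent of $r$ follows by dilating $B_r$ to the unit ball, the inequality being scale-homogeneous with no lower-order terms.

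\emph{Main obstacle.} The one nonroutine point is the potential-type kernel estimate for $T$: one must look inside the construction of \cite{IL93} and verify that averaging the cone homotopy over the bounded ball $B_r$ leaves a kernel bounded by $|x-y|^{1-n}$ with an $r$-independent constant. Given that, the rest is just the definition of $\omega_{B_r}$ together with Hardy--Littlewood--Sobolev. (Alternatively, the $L^p\to L^{\frac{np}{n-p}}$ bound for $T$ is itself recorded in \cite[Proposition 4.1]{IL93}, so one may simply cite it; I sketched the HLS route to indicate where the Sobolev improvement originates.)
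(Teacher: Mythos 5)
Your argument is correct. Note, though, that the paper does not prove this lemma at all: it is quoted directly as \cite[Corollary 4.2]{IL93}, so there is nothing in the paper to deviate from. What you have written is essentially a reconstruction of the proof behind that citation: the identity $\omega-\omega_{B_r}=T(d\omega)$ is immediate from the definition \eqref{omegaD}, and the Sobolev gain then rests entirely on the mapping property $T:L^p(B_r,\Lambda^{\ell+1})\to L^{\frac{np}{n-p}}(B_r,\Lambda^{\ell})$, which in \cite{IL93} is obtained exactly as you indicate, from the pointwise kernel estimate $|T\eta(x)|\le C(n,\ell)\int_{B_r}|x-y|^{1-n}|\eta(y)|\,dy$ of \cite[Proposition 4.1]{IL93} together with the Hardy--Littlewood--Sobolev theorem for the Riesz potential $I_1$ (using $1<p<n$). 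You correctly flag the only point needing care: the constant in that kernel bound depends on the domain only through the ratio $(\diam B_r)^n/|B_r|$, which is a dimensional constant for balls, and in any case the $r$-independence follows from your dilation remark since the inequality is scale-invariant (no lower-order terms appear). So your proposal is sound; the alternative you mention, simply citing \cite{IL93} as the paper does, is of course also legitimate.
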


\subsection{Regularity results in constant coefficient case}
We consider the weak solution  $h\in W^{d,p}(B_{2r},\Lambda^{\ell})$ to
\begin{equation}\label{pLaplaceeq}
d^*(|dh|^{p-2}dh) =0 \quad \text{in }\ B_{r}.
\end{equation}
Note that this equation is exactly the same as \eqref{mainpb} with $F\equiv 0$ and $a\equiv 1$. Then the results in \cite{Uhl77} and \cite{Ham92} imply that $dh\in C^{0,\alpha}_{\loc}(B_{2r},\Lambda^{\ell+1})$   for some $\alpha\in(0,1)$ depending on $n$ and $p$. In this paper, we will  only use the local boundedness of $dh$. In particular, we refer to Theorem~4.1 in \cite{Ham92}.
\begin{lem}\label{lem:locbdd_h}
Let $h\in W^{d,p}(B_{r},\Lambda^{\ell})$ be a weak solution to
\eqref{pLaplaceeq}. Then we have
 $$
 \sup_{B_{r/2}} |d h | \le c \left(\fint_{B_r}|dh|^p\,dx\right)^{\frac{1}{p}}
 $$
 for some $c=c(n,\ell,p)>0$.
\end{lem}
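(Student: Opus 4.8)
The plan is to prove Lemma~\ref{lem:locbdd_h}, the local sup-bound for $dh$ where $h\in W^{d,p}(B_r,\Lambda^\ell)$ solves $d^*(|dh|^{p-2}dh)=0$ weakly in $B_r$. The natural route is a Moser iteration / De Giorgi argument applied to $\omega := dh$, which is a closed $(\ell+1)$-form ($d\omega = dd h = 0$) satisfying $d^*(|\omega|^{p-2}\omega)=0$ in $B_r$; this is precisely the Uhlenbeck-type system \eqref{Uhlenbeck} in the exact case, and the assertion is Theorem~4.1 in Hamburger~\cite{Ham92}. So the cleanest exposition is: invoke that reference after setting up notation, but if a self-contained argument is wanted, here is how I would carry it out.

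First, by Lemma~\ref{lem:gaugefixing} I may replace $h$ by a coclosed representative $\tilde h$ with $d\tilde h = dh$ and $d^*\tilde h = 0$, so that $h\in W^{1,p}_{\loc}$ with $\nabla h$ controlled by $dh$ via the local Gaffney inequality (Lemma~\ref{lem:Gaffneylocal}); this lets me work with honest Sobolev forms. Next I establish a Caccioppoli inequality for $\omega = dh$: I test the weak formulation \eqref{weakform} (with $F\equiv 0$, $a\equiv1$) against $\phi$ chosen so that $d\phi$ produces a term like $\eta^p |\omega|^{p-2}\omega$ after using the algebraic identities $|\theta|^2\omega = \theta\iprod(\theta\wedge\omega) + \theta\wedge(\theta\iprod\omega)$ and $\langle\theta\wedge\omega,\phi\rangle = \langle\omega,\theta\iprod\phi\rangle$ from Section~\ref{sec:preliminaries} — concretely one uses $\phi = T(\eta^p |\omega|^{q-p}\omega)$-type test forms, exploiting that $\omega$ is closed so $d(T\omega)=\omega$ up to lower-order terms, as in the Sobolev-Poincar\'e discussion around \eqref{omegaD}. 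This yields, for all $q\ge p$ and cutoffs $\eta$, a bound of the shape $\int \eta^p |\omega|^{q-p}|\nabla(|\omega|^{p/q}\,\cdot\,)|^{\text{stuff}} \lesssim \int |\nabla\eta|^p |\omega|^{q}$, i.e.\ a reverse-H\"older-ready Caccioppoli estimate for powers of $|\omega|$.

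Then I run Moser iteration: combining the Caccioppoli inequality with the Sobolev inequality on balls gives a reverse H\"older inequality $\big(\fint_{B_{s'}}|\omega|^{\chi q}\big)^{1/\chi} \le \frac{C}{(s-s')^n}\fint_{B_s}|\omega|^{q}$ with $\chi = n/(n-p)>1$ (when $p<n$; for $p\ge n$ one first gains a small amount of integrability via Sobolev embedding into a higher $L^r$ and reduces to an exponent below the dimension). Iterating over a sequence of shrinking radii $r/2 = s_\infty < \cdots < s_1 = r$ with geometrically summable gaps, starting from $q=p$, and controlling the product of constants (which converges because $\sum j\chi^{-j}<\infty$), produces $\|\omega\|_{L^\infty(B_{r/2})} \le C\big(\fint_{B_r}|\omega|^p\big)^{1/p}$, which is exactly the claim since $\omega = dh$.

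The main obstacle is the first real step: deriving a clean Caccioppoli inequality for $|dh|$ in the differential-forms setting, because unlike the scalar $p$-Laplacian one cannot simply test with $\eta^p(h-c)$ — the test form must be coexact (or lie in $W^{1,p}_{d^*,T}$, cf.\ Remark~\ref{rmk:weakform}) and must be engineered through the operator $T$ so that $d\phi$ reproduces the desired power of $\omega$ modulo terms absorbable via the identities above and the Sobolev-Poincar\'e Lemma~\ref{lem:sobopoin}. Handling the commutator-type error terms $T(d(\eta^p\cdots))$ and keeping the constants dimensionally correct through the iteration is the delicate part; everything after the reverse H\"older inequality is by-now standard. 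Given that this is fully carried out in~\cite{Ham92} (and the closely related~\cite{Uhl77}), in the paper I would simply cite Theorem~4.1 of~\cite{Ham92} and omit the details, noting only that the exact case $\omega = dh$ is covered there and that $d^*h=0$ may be assumed by Lemma~\ref{lem:gaugefixing}.
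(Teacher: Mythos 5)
Your proposal takes essentially the same route as the paper: the paper gives no independent proof of Lemma~\ref{lem:locbdd_h}, observing only that \eqref{pLaplaceeq} is \eqref{mainpb} with $a\equiv 1$, $F\equiv 0$ and citing Theorem~4.1 of \cite{Ham92} (together with \cite{Uhl77}), which is exactly what you ultimately do. A minor caveat on your optional self-contained sketch: an $L^\infty$ bound on $dh$ is not obtained by testing the equation with powers of $\omega=dh$ alone (that only improves integrability of $\omega$); the Uhlenbeck--Hamburger argument first differentiates the system via difference quotients to show that a power of $|dh|$ is a subsolution of a uniformly elliptic divergence-form equation and only then runs De Giorgi/Moser, but since you defer to the cited theorem this does not affect the correctness of your proposal.
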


\section{Calder\'on-Zygmund estimates}\label{Sec_last}

In this section we prove our main theorem, Theorem~\ref{mainthm}. We start by deriving two lemmas: the first is \textit{higher integrability}, sometimes referred to as the \textit{self-improving property}, and the second is \textit{comparison estimates}.

\subsection{Higher integrability}
We first derive higher integrability for the exterior derivatives of  weak solutions to \eqref{mainpb} with general coefficients. 
\begin{lem}\label{lem:high}
Let  $u\in W^{1,p}_{\loc}(\Omega, \Lambda^{\ell})$ be a local weak solution to  \eqref{mainpb}. If $F\in L^\frac{q}{p-1}(\Omega,\Lambda^{\ell +1})$ for some $q>p$, then there exists $\sigma=\sigma(n,\ell,\nu,L,p,q)$ with $0<\sigma<\frac{q}{p}-1$ such that $|du|^p \in L^{1+\sigma}_{\loc}(\Omega)$ with the estimate
$$
\fint_{B_r} |du|^{p(1+\sigma)}\,dx  \le c \left(\fint_{B_{2r}} |du|^{p}\,dx \right)^{1+\sigma} + c \fint_{B_{2r}} |F|^{\frac{p}{p-1}(1+\sigma)}\,dx
$$
for all $B_{2r}\Subset\Omega$ and for some $c=c(n,\ell,\nu,L,p,q)>0$.
\end{lem}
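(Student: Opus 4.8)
The plan is to establish a reverse Hölder inequality for $|du|^p$ and then invoke Gehring's lemma. First I would fix $B_{2r} \Subset \Omega$ and, using Lemma~\ref{lem:gaugefixing}, replace $u$ on the relevant ball by a coclosed representative with the same exterior derivative; this costs nothing since the statement only involves $du$, and it lets me use the local Gaffney inequality to control $\nabla u$ by $du$. Then, for a fixed ball $B_s(x_0)$ with $B_{2s}(x_0)\subset B_{2r}$, I would test the weak formulation \eqref{weakform} with $\phi = \eta^p (u - u_{B_{2s}})$ where $\eta$ is a standard cutoff between $B_s$ and $B_{2s}$ and $u_{B_{2s}}$ is the closed form from \eqref{omegaD}; by Remark~\ref{rmk:weakform} this is an admissible test function in $W^{1,p}_{d^*,T}$ after the gauge fixing (one should check $d\phi$ has the right structure, e.g.\ that $d(u-u_{B_{2s}})=du$ since $u_{B_{2s}}$ is closed, and use the product formula for $d$). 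Expanding, using the ellipticity bound $a\in[\nu,L]$, Young's inequality to absorb the term with $d\eta \wedge (u-u_{B_{2s}})$, and the bound on $F$, I obtain a Caccioppoli-type estimate
$$
\fint_{B_s} |du|^p \, dx \le c \fint_{B_{2s}} \left|\frac{u - u_{B_{2s}}}{s}\right|^p dx + c \fint_{B_{2s}} |F|^{\frac{p}{p-1}}\,dx.
$$

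Next I would convert the right-hand side into a subunit power of $|du|^p$. If $1<p<n$, apply the Sobolev–Poincaré inequality for differential forms, Lemma~\ref{lem:sobopoin}, to bound $\fint_{B_{2s}} |(u-u_{B_{2s}})/s|^p\,dx$ by $c\big(\fint_{B_{2s}} |du|^{p_*}\,dx\big)^{p/p_*}$ with $p_* = np/(n+p) < p$; if $p \ge n$ one uses the standard Sobolev embedding into a fixed larger exponent with a subunit exponent on the right after Hölder (here the coclosed reduction and Lemma~\ref{lem:Gaffneylocal} are what allow the use of the ordinary Sobolev–Poincaré inequality for the full $W^{1,p}$ function, or one argues componentwise). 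This yields the reverse Hölder inequality
$$
\fint_{B_s} |du|^p\,dx \le c \left( \fint_{B_{2s}} |du|^{p\vartheta}\,dx \right)^{1/\vartheta} + c \fint_{B_{2s}} |F|^{\frac{p}{p-1}}\,dx
$$
for some $\vartheta = \vartheta(n,p) \in (0,1)$, valid on all such balls. Since $F \in L^{\frac{q}{p-1}}_{\loc}$ with $q>p$, the datum term has the higher integrability $|F|^{\frac{p}{p-1}} \in L^{q/p}_{\loc}$ with $q/p>1$, so Gehring's lemma (in the form with a right-hand side, e.g.\ the version in Giaquinta's book) applies and produces $\sigma>0$ with $|du|^p \in L^{1+\sigma}_{\loc}$ together with the claimed quantitative estimate on $B_r$ versus $B_{2r}$; a covering argument passing from the small balls $B_s$ to $B_r, B_{2r}$ gives the stated form. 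Crucially, Gehring's lemma lets one take $\sigma$ as small as desired, so in particular one can ensure $\sigma < q/p - 1$.

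The main obstacle I anticipate is the test function step: verifying that $\eta^p(u - u_{B_{2s}})$ — or a suitable variant — genuinely lies in the space $W^{1,p}_{d^*,T}(B_{2s},\Lambda^\ell)$ required by Remark~\ref{rmk:weakform}, and cleanly computing $d\phi$ via the product formula so that the Caccioppoli argument closes without leftover terms. This is where the interplay between the gauge-fixing lemma (to arrange $d^*u = 0$), the operator $T$ from \cite{IL93} (so that $d(u - u_{B_{2s}}) = du$), and the structure of $d(\eta^p \wedge \cdot)$ all have to fit together; the purely analytic part (Young's inequality, absorption, Gehring) is routine once the differential-form bookkeeping is correct.
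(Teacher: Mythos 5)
Your outline is essentially the paper's own proof: test \eqref{weakform} with $\eta^p(u-u_{B_{2r}})$, where $u_{B_{2r}}$ is the closed form from \eqref{omegaD}, absorb by Young's inequality to get a Caccioppoli estimate, bound the Poincar\'e term through the Sobolev--Poincar\'e inequality for forms at a subcritical exponent, and conclude with Gehring's lemma. Two corrections, though. First, the ``main obstacle'' you anticipate is not there: since $u_{B_{2r}}=u-T(du)\in W^{1,p}(B_{2r},\Lambda^\ell)$ and $\eta\in C_0^\infty(B_{2r})$, the test form $\eta^p(u-u_{B_{2r}})$ belongs to $W^{1,p}_0(B_{2r},\Lambda^\ell)$ and is admissible directly by the definition of weak solution; neither the gauge fixing of Lemma~\ref{lem:gaugefixing}, nor Remark~\ref{rmk:weakform}, nor any Gaffney inequality is needed in this lemma (the paper reserves those for the comparison step). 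All one uses is that $du_{B_{2r}}=0$, so $d\bigl(\eta^p(u-u_{B_{2r}})\bigr)=\eta^p\,du+p\eta^{p-1}\,d\eta\wedge(u-u_{B_{2r}})$.

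Second, your exponent bookkeeping has a small but genuine gap. For $1<p\le \frac{n}{n-1}$ one has $\frac{np}{n+p}\le 1$, so Lemma~\ref{lem:sobopoin} cannot be invoked with $p_*=\frac{np}{n+p}$ as you state; and the separate route you sketch for $p\ge n$ (componentwise Sobolev--Poincar\'e plus Lemma~\ref{lem:Gaffneylocal}) is both unnecessary and delicate, because the subtracted form $u_{B_{2r}}$ is not the componentwise average and the local Gaffney inequality reintroduces the full gradient on the larger ball, which spoils the reverse H\"older structure needed for Gehring. The paper fixes both issues uniformly by choosing $p_*=\min\{\frac{p+1}{2},\frac{np}{n+p}\}$, i.e.\ some $p_*\in(1,\min\{p,n\})$ whose Sobolev exponent is at least $p$, so that Lemma~\ref{lem:sobopoin} applies for every $1<p<\infty$ with no case split. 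With that choice your argument closes exactly as in the paper: apply Lemma~\ref{lem:Gehring} with $f=|du|^p$, $g=|F|^{\frac{p}{p-1}}$, $m=\frac{p_*}{p}$, $s=\frac{q}{p}$, which already delivers some $\sigma<\frac{q}{p}-1$ together with the stated estimate, so no extra remark about shrinking $\sigma$ is required.
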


\begin{proof}
For  $B_{2r}\Subset \Omega$, let $\eta\in C^\infty_0(B_{2r})$ be a standard cut-off function such that $0\le \eta \le 1$, $\eta\equiv 1$ in $B_r$ and $|\nabla \eta|\le c(n)/r$. We also consider  the closed form  $u_{B_{2r}}\in W^{1,p}(B_{2r},\Lambda^{\ell})$ defined as in \eqref{omegaD} with $B_{2r}$ in place of $B_r$. Note that $d u_{B_{2r}}=0$. Then we take $\eta^p(u-u_{B_{2r}})\in W^{1,p}_0(B_{2r},\Lambda^{\ell})$ as a test function in the weak formulation \eqref{weakform} to obtain
$$\begin{aligned}
0 & = \int_{B_{2r}} a(x)|du|^{p-2} \langle du , d(\eta^p(u-u_{B_{2r}})\rangle \, dx - \int_{B_{2r}}  \langle F , d(\eta^p(u-u_{B_{2r}})\rangle \, dx \\
& = \int_{B_{2r}} a(x)|du|^{p-2} \eta^p \langle du , du\rangle  \, dx + \int_{B_{2r}} a(x)|du|^{p-2} p \eta^{p-1}\langle du , d\eta \wedge (u-u_{B_{2r}})\rangle \, dx \\
& \qquad - \int_{B_{2r}} \eta^p \langle F , du\rangle \, dx  - \int_{B_{2r}} p\eta^{p-1} \langle F , d\eta \wedge (u-u_{B_{2r}})\rangle \, dx,
\end{aligned}$$
which implies that
$$\begin{aligned}
I_1&:= \int_{B_{2r}} a(x)|du|^{p-2} \eta^p \langle du , du\rangle  \, dx \\
 &=- \int_{B_{2r}} a(x)|du|^{p-2} p \eta^{p-1}\langle du , d\eta \wedge (u-u_{B_{2r}})\rangle \, dx \\
& \quad + \int_{B_{2r}} \eta^p \langle F , du\rangle \, dx  + \int_{B_{2r}} p\eta^{p-1} \langle F , d\eta \wedge (u-u_{B_{2r}})\rangle \, dx=: I_2 + I_3+I_4.
\end{aligned}$$
We estimate $I_1 \sim I_4$ separately.
For $I_1$ and $I_2$,  since $0<\nu \le a(x)\le L$ for any $x \in \Omega$, we derive
$$
I_1 \ge \nu \int_{B_{2r}} |du|^{p} \eta^p   \, dx
$$
and
$$\begin{aligned}
|I_2|& \le  L p \int_{B_{2r}} |du|^{p-1}  \eta^{p-1} |d\eta| |u-u_{B_{2r}}| \, dx\\
& \le  c \int_{B_{2r}} |du|^{p-1}  \eta^{p-1} \frac{|u-u_{B_{2r}}|}{2r} \, dx\\
& \le   \frac{\nu}{4} \int_{B_{2r}} |du|^{p} \eta^p   \, dx + c \int_{B_{2r}}  \left[\frac{|u-u_{B_{2r}}|}{2r}\right]^p \, dx
\end{aligned}$$
 by using Young's  inequality. 
 Similarly, we obtain 
$$\begin{aligned}
|I_3| \le   \int_{B_{2r}} \eta^p |F||du| \, dx  \le  \frac{\nu}{4} \int_{B_{2r}} |du|^{p} \eta^p   \, dx + c \int_{B_{2r}}  |F|^{\frac{p}{p-1}} \eta^p \, dx
\end{aligned}$$
and
$$\begin{aligned}
|I_4|& \le   c \int_{B_{2r}} |F|\eta^{p-1} \frac{|u-u_{B_{2r}}|}{2r} \, dx \le  c \int_{B_{2r}} |F|^{\frac{p}{p-1}} \eta^p   \, dx + c \int_{B_{2r}}  \left[\frac{|u-u_{B_{2r}}|}{2r}\right]^p \, dx.
\end{aligned}$$

Combining the above results, we conclude
$$
\fint_{B_{r}} |du|^{p}   \, dx  \le  2^n \fint_{B_{2r}} |du|^{p} \eta^p   \, dx   \le c \fint_{B_{2r}}  \left[\frac{|u-u_{B_{2r}}|}{2r}\right]^p \, dx + c \fint_{B_{2r}} |F|^{\frac{p}{p-1}}   \, dx .
$$ 
Here, by the Sobolev-Poincar\'e inequality in Lemma~\ref{lem:sobopoin} with $p$ and $B_r$ replaced by 
$p_* = \min\{\frac{p+1}{2}, \frac{np}{n+p} \}  \in (1,p) $ and $B_{2r}$, respectively, we have
$$ 
\fint_{B_{2r}}  \left[\frac{|u-u_{B_{2r}}|}{2r}\right]^p \, dx \le c \left(\fint_{B_{2r}} |du|^{p_*}\,dx \right)^{\frac{p}{p_*}} .
$$
Therefore, we obtain
$$
\fint_{B_{r}} |du|^{p}   \, dx \le c \left(\fint_{B_{2r}} |du|^{p_*}\,dx \right)^{\frac{p}{p_*}} + c \fint_{B_{2r}} |F|^{\frac{p}{p-1}}   \, dx
$$
for every $B_{2r}\Subset \Omega$. Finally, applying Lemma~\ref{lem:Gehring} below with $f=|du|^p$,  $g=|F|^{\frac{p}{p-1}}$, $m=\frac{p_*}{p}$ and $s=q/p$, we get the conclusion.  
\end{proof}

\begin{lem} \label{lem:Gehring}(Gehring, \cite[Theorem 6.6]{Giusti})  
Let $f\in L^1(B_r)$ and $g\in L^s(B_r)$ for some $s>1,$ and assume that for every $B_{\rho}(y)\subset B_{2\rho}(y)\subset B_r$ we have 
$$
\fint_{B_{\rho}(y)} |f|   \, dx \le C \left(\fint_{B_{2\rho}(y)} |f|^{m}\,dx \right)^{\frac{1}{m}} +C \fint_{B_{2\rho}(y)} |g|   \, dx
$$
with $0<m<1$. Then there exists $\sigma=\sigma(n,C,m,s)$ with $0< \sigma< s-1$ such that $f\in L^{1+\sigma}(B_{r/2})$ and 
$$
\fint_{B_{r/2}} |f|^{1+\sigma}   \, dx \le c \left(\fint_{B_{r}} |f| \,dx \right)^{1+\sigma} + \fint_{B_{r}} |g|^{1+\sigma}   \, dx
$$
for some $c=c(n,C,m)>0$.
\end{lem}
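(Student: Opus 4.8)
Since Lemma~\ref{lem:Gehring} is the classical self-improving (Gehring) reverse-H\"older lemma, in the paper we simply cite \cite[Theorem~6.6]{Giusti}; should one wish to reproduce the argument, here is the plan. First I would recast the hypothesis as a genuine reverse H\"older inequality: setting $q:=1/m>1$, $F:=|f|^m$ and $G:=|g|^m$, raising the assumed inequality to the power $m\in(0,1)$ and using $(a+b)^m\le a^m+b^m$ together with Jensen's inequality yields
$$
\Big(\fint_{B_\rho(y)}F^q\,dx\Big)^{1/q}\le b\fint_{B_{2\rho}(y)}F\,dx+b\fint_{B_{2\rho}(y)}G\,dx
$$
whenever $B_{2\rho}(y)\subset B_r$, with $b=b(C,m)$; it then suffices to prove $F\in L^{q+\tau}_{\loc}$ for some small $\tau>0$, since $F^{q+\tau}=|f|^{1+\sigma}$ and $G^{q+\tau}=|g|^{1+\sigma}$ with $\sigma:=m\tau$. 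To make the eventual absorption legitimate I would work throughout with the truncation $F_k:=\min\{F,k\}$ (whose super-level sets coincide with those of $F$ below level $k$), keep all constants independent of $k$, and pass to the limit $k\to\infty$ by monotone convergence at the very end.

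Next I would fix concentric balls $B_\rho\subset B_{\rho'}\subset B_r$ with $r/2\le\rho<\rho'\le r$ and a threshold $\lambda_0$ comparable to $\fint_{B_r}(F+G)\,dx$, enlarged by a factor depending on $n$ and $(\rho'-\rho)^{-1}$. For each level $\lambda>\lambda_0$ a standard Calder\'on--Zygmund / Vitali covering of the super-level set $\{F>\lambda\}$ produces a countable family of sets $\{Q_i\}$ (dyadic cubes or balls) with fixed dilates $Q_i^*$ (say their concentric doubles) satisfying $Q_i^*\subset B_{\rho'}\subset B_r$ (the choice of $\lambda_0$ makes the radii small enough for this), of bounded overlap, covering $\{F>\lambda\}\cap B_\rho$ up to a null set, and such that $\fint_{Q_i}F$ and $\fint_{Q_i^*}F$ are both comparable to $\lambda$. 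From $\fint_{Q_i}F>\lambda$, splitting $F$ according to whether $F\le\lambda/2$ or $F>\lambda/2$ and summing over the (disjoint) $Q_i$ gives $\sum_i|Q_i|\le\frac{2}{\lambda}\int_{\{F>\lambda/2\}\cap B_{\rho'}}F\,dx$.

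On each $Q_i$ I would invoke the reverse H\"older inequality on the pair $Q_i\subset Q_i^*$, which since $\fint_{Q_i^*}F\le c\lambda$ gives $\big(\fint_{Q_i}F^q\big)^{1/q}\le c\lambda+b\fint_{Q_i^*}G$; raising to the power $q$, multiplying by $|Q_i|$, summing (the $Q_i^*$ having bounded overlap), using the bound on $\sum_i|Q_i|$, and splitting $G$ according to whether $G\le\lambda$ or $G>\lambda$, one arrives at the super-level estimate
$$
\int_{\{F>\lambda\}\cap B_\rho}F^q\,dx\le c\,\lambda^{q-1}\int_{\{F>\lambda/2\}\cap B_{\rho'}}F\,dx+c\int_{\{G>\lambda\}\cap B_{\rho'}}G^q\,dx\qquad(\lambda>\lambda_0).
$$
Multiplying by $\lambda^{\tau-1}$, integrating over $\lambda\in(\lambda_0,\infty)$ and using Fubini, the left side is bounded below by $c\,\tau^{-1}\int_{B_\rho}F^{q+\tau}$ up to lower-order terms absorbed by Young's inequality, the first term on the right is bounded above by $\frac{c}{q-1+\tau}\int_{B_{\rho'}}F^{q+\tau}$, and the last by $c\,\tau^{-1}\int_{B_{\rho'}}G^{q+\tau}$; after multiplying through by $\tau$, the coefficient of $\int_{B_{\rho'}}F^{q+\tau}$ on the right becomes $\frac{c\,\tau}{q-1+\tau}$, which is $<1$ once $\tau$ is small enough — and this is the single point where the hypothesis $m<1$ enters essentially, through $q-1=\frac{1-m}{m}>0$.

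The main obstacle is precisely this absorption, because the ``bad'' term $\int F^{q+\tau}$ lives on the larger ball $B_{\rho'}$ and not on $B_\rho$. I would handle it in the standard way: carry out the two preceding paragraphs for every pair with $r/2\le\rho<\rho'\le r$, letting $\lambda_0$ (hence the residual data terms) depend on $(\rho'-\rho)^{-1}$, so that $\phi(\rho):=\int_{B_\rho}F^{q+\tau}$ — finite thanks to the truncation — satisfies $\phi(\rho)\le\theta\,\phi(\rho')+A(\rho'-\rho)^{-\beta}+B$ for all such $\rho<\rho'$, with $\theta=\frac{c\,\tau}{q-1+\tau}<1$; the standard iteration lemma (cf.\ \cite{Giusti}) then yields $\phi(r/2)\le c(Ar^{-\beta}+B)$. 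Finally, letting $k\to\infty$, undoing the substitution via $F=|f|^m$, $G=|g|^m$, $\sigma=m\tau$, and using Jensen's inequality ($\fint_{B_r}F\le(\fint_{B_r}|f|)^m$ and $(\fint_{B_r}G)^{q+\tau}\le\fint_{B_r}|g|^{1+\sigma}$) to express the data in the claimed form, one obtains the asserted inequality on $B_{r/2}$, with $\sigma=\sigma(n,C,m,s)$ — the dependence on $s$ appearing only through the requirement $\sigma<s-1$, i.e.\ $\tau<(s-1)/m$, which uses $g\in L^s(B_r)$ — and $c=c(n,C,m)$.
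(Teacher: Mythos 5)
The paper gives no proof of this lemma at all—it is quoted as a known result from \cite[Theorem 6.6]{Giusti}—so there is no internal argument to compare against; what you propose is, in substance, the classical Gehring--Giusti proof itself (substitution $F=|f|^m$, $q=1/m$, truncation, exit-time/Vitali covering of super-level sets, a level-set estimate, multiplication by $\lambda^{\tau-1}$ and Fubini, absorption for small $\tau$, and the iteration-over-radii lemma to cope with the larger ball), and it is also stylistically consistent with how the paper runs the same machinery in Section 3.3 for the main theorem. Structurally your sketch is correct, including the bookkeeping $\sigma=m\tau$, the cap $\sigma<s-1$ coming only from $g\in L^s$, and the final Jensen step returning to $|f|,|g|$.

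One intermediate display is not justified as written, although this does not derail the argument. Under your substitution the hypothesis reads $\fint_{B_\rho}F^q\,dx\le C\big(\fint_{B_{2\rho}}F\,dx\big)^q+C\fint_{B_{2\rho}}G^q\,dx$ (since $|g|=G^q$), so raising to the power $1/q$ gives $\big(\fint_{B_\rho}F^q\,dx\big)^{1/q}\le b\fint_{B_{2\rho}}F\,dx+b\big(\fint_{B_{2\rho}}G^q\,dx\big)^{1/q}$; your version with $b\fint_{B_{2\rho}}G\,dx$ is strictly stronger, and Jensen's inequality runs in the wrong direction to produce it. Fortunately the correct, weaker form is all you actually use: on each covering ball it still gives $\big(\fint_{Q_i}F^q\big)^{1/q}\le c\lambda+b\big(\fint_{Q_i^*}G^q\big)^{1/q}$, and raising to the power $q$, summing with bounded overlap, and splitting on $\{G>\lambda\}$ yields exactly the super-level estimate you state, with the term $\int_{\{G>\lambda\}\cap B_{\rho'}}G^q\,dx$; everything downstream is unchanged. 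In a full write-up you should also tighten the truncation step (derive the level-set estimate with the integrand weighted by the truncation, or cap the $\lambda$-integration at the truncation level, so all quantities are finite before absorbing and before invoking the iteration lemma), and note that converting the residual term $\lambda_0^{\tau}\int_{B_r}F^q\,dx$ into the claimed form $c\big(\fint_{B_r}|f|\big)^{1+\sigma}+c\fint_{B_r}|g|^{1+\sigma}$ needs a Young-type splitting in addition to the two Jensen inequalities you cite—both routine and compatible with your plan.
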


\subsection{Comparison estimates}
Let $u\in W^{1,p}_{\loc}(\Omega, \Lambda^\ell)$ be a weak solution to \eqref{mainpb}, where $a:\Omega\to [\nu,L]$ satisfies the $(\delta,R)$-vanishing condition for some $R>0$ and $F\in L^{\frac{q}{p-1}}(\Omega,\Lambda^{\ell+1})$ for some $q>p$. Note that $\delta\in(0,1)$ will be chosen sufficiently small later in Section~\ref{sec:proof}.  We fix any $B_{4r}\Subset \Omega$ with $r\le R/2$. 
We first note from the higher integrability in Lemma~\ref{lem:high} with $r$ replaced by $2r$ that 
\begin{equation}\label{highestimate}
\left(\fint_{B_{2r}} |du|^{p(1+\sigma)} \, dx \right)^{\frac{1}{1+\sigma}} \le c\fint_{B_{4r}} |du|^{p} \, dx + c\left(\fint_{B_{4r}} |F|^{\frac{p}{p-1}(1+\sigma)} \, dx \right)^{\frac{1}{1+\sigma}}.
\end{equation}
Moreover, in view of Lemma~\ref{lem:gaugefixing}, we can assume that 
$$
d^* u=0 \quad  \text{in }\  B_{2r}.
$$

Let  
$h\in u+ W^{1,p}_{d^*,T}(B_{2r},\Lambda^\ell)$ be the unique weak solution to 
\begin{equation}\label{eqcom}
\begin{cases}
d^*(\overline a |dh|^{p-2}d h) =0  & \text{in }\ B_{2r},\\
d^*h = d^* u =0  & \text{in }\ B_{2r},\\
\nu \wedge  h  = \nu \wedge u & \text{on } \ \partial B_{2r},
\end{cases}
\end{equation}
where 
$$
\overline a = (a)_{B_{2r}} = \fint_{B_{2r}} a(x)\, dx.
$$
Then we have the following comparison estimates.

\begin{lem}\label{lem:comparision}
Under the above setting, suppose 
$$
\fint_{B_{4r}} |du|^{p} \, dx  \le \lambda 
$$
for some $\lambda>0$.
Then we have that
\begin{equation}\label{comparison0}
\fint_{B_{2r}} |dh|^p \,dx \le  \fint_{B_{2r}} |du|^p \,dx  \le 2^n \lambda.
\end{equation}
Moreover, for every $\varepsilon\in (0,1)$, there exist small $\delta,\delta_1\in(0,1)$ depending on $n,\ell,p,\nu,L, q$ and $\varepsilon$  such that if  $a$ is $(\delta,R)$-vanishing and 
\begin{equation}\label{Fdelta1}
\left(\fint_{B_{4r}} |F|^{\frac{p}{p-1}(1+\sigma)} \, dx \right)^{\frac{1}{1+\sigma}} \le \delta_1 \lambda,
\end{equation}
then
\begin{equation}\label{comparison1}
\fint_{B_{2r}} |du -dh|^p \,dx \le \varepsilon \lambda .
\end{equation}
\end{lem}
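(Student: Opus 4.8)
\textbf{Proof plan for Lemma~\ref{lem:comparision}.}

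The plan is to establish the two comparison estimates in sequence, using the energy/monotonicity structure of the $p$-Laplacian system of forms together with the constant-coefficient regularity (Lemma~\ref{lem:locbdd_h}), the higher integrability \eqref{highestimate}, and the smallness of the BMO seminorm of $a$.

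\emph{Step 1: Energy comparison \eqref{comparison0}.} Since $h\in u+W^{1,p}_{d^*,T}(B_{2r},\Lambda^\ell)$ solves \eqref{eqcom}, the difference $u-h\in W^{1,p}_{d^*,T}(B_{2r},\Lambda^\ell)$ is an admissible test form (by Remark~\ref{rmk:weakform}, applied to the equation for $h$ with constant coefficient $\overline a$). Testing the weak formulation of \eqref{eqcom} with $\phi=u-h$ gives
$$
\int_{B_{2r}} \overline a\,|dh|^{p-2}\langle dh,\, dh-du\rangle\,dx = 0,
$$
hence $\int_{B_{2r}} |dh|^p\,dx = \int_{B_{2r}} |dh|^{p-2}\langle dh,du\rangle\,dx \le \big(\int_{B_{2r}}|dh|^p\big)^{\frac{p-1}{p}}\big(\int_{B_{2r}}|du|^p\big)^{\frac1p}$ by Hölder, which yields $\fint_{B_{2r}}|dh|^p\,dx\le\fint_{B_{2r}}|du|^p\,dx$; the bound by $2^n\lambda$ follows from $B_{2r}\subset B_{4r}$ and the hypothesis on $\fint_{B_{4r}}|du|^p$.

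\emph{Step 2: Comparison of $du$ and $dh$ \eqref{comparison1}.} The strategy is the standard two-step splitting. Introduce the intermediate form $w\in u+W^{1,p}_{d^*,T}(B_{2r},\Lambda^\ell)$ solving $d^*(\overline a\,|dw|^{p-2}dw)=d^*F$ in $B_{2r}$ with the same boundary and coclosedness conditions; equivalently compare $u$ to $h$ directly and split the error into a "frozen coefficient" part and a "right-hand side" part. Concretely:

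(a) \emph{Coefficient freezing.} Test the weak formulation \eqref{weakform} for $u$ and the weak formulation of \eqref{eqcom} for $h$, both with $\phi=u-h$, and subtract. Using the well-known monotonicity inequality for the $p$-Laplacian vector field — for $p\ge 2$, $\langle |\xi|^{p-2}\xi-|\zeta|^{p-2}\zeta,\xi-\zeta\rangle\ge c|\xi-\zeta|^p$, and for $1<p<2$ the corresponding inequality with $(|\xi|+|\zeta|)^{p-2}|\xi-\zeta|^2$ — one bounds $\int_{B_{2r}}|du-dh|^p$ (or its sub-quadratic analogue) by
$$
\Big|\int_{B_{2r}} (a(x)-\overline a)\,|du|^{p-2}\langle du,\, d(u-h)\rangle\,dx\Big| + \Big|\int_{B_{2r}} \langle F,\, d(u-h)\rangle\,dx\Big|.
$$
The first term is controlled by splitting $B_{2r}$ according to whether $|a-\overline a|$ is small: on the small set one absorbs into the left side, on the complement (which by Chebyshev/BMO has small measure controlled by $\delta$) one uses Hölder with exponent $1+\sigma$ from the higher integrability \eqref{highestimate} to trade the lost integrability against the small measure. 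This is exactly where the $(\delta,R)$-vanishing hypothesis and the self-improving exponent $\sigma$ from Lemma~\ref{lem:high} enter, and it is the \textbf{main obstacle}: one must carefully balance the exponents so that the $\delta$-small measure is raised to a positive power while all norms of $du$ are controlled via \eqref{highestimate} by $\lambda$. The second (right-hand side) term is handled by Young's inequality and the smallness assumption \eqref{Fdelta1}, again using Hölder with exponent $1+\sigma$ and absorbing.

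(b) \emph{Cleanup.} Collect the estimates: after absorption one gets $\fint_{B_{2r}}|du-dh|^p\,dx\le c(\delta^{\beta}+\delta_1^{\beta'})\lambda$ for some positive powers $\beta,\beta'$ depending on $\sigma$ (in the case $1<p<2$ one first obtains control of $\fint (|du|+|dh|)^{p-2}|du-dh|^2$ and then upgrades to $\fint|du-dh|^p$ via Hölder and \eqref{comparison0}, as is standard). Given $\varepsilon\in(0,1)$, choose $\delta$ and $\delta_1$ small enough, depending only on $n,\ell,p,\nu,L,q,\varepsilon$, that $c(\delta^{\beta}+\delta_1^{\beta'})\le\varepsilon$, which proves \eqref{comparison1}.

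Throughout Step 2, the admissibility of $u-h$ as a test form in both weak formulations rests on Remark~\ref{rmk:weakform} together with the coclosedness $d^*u=d^*h=0$ in $B_{2r}$ (arranged via Lemma~\ref{lem:gaugefixing}), so no boundary terms appear. I expect the only genuinely delicate point to be the exponent bookkeeping in the coefficient-freezing estimate (a); everything else is an application of monotonicity, Hölder, Young, and the already-established higher integrability and energy bounds.
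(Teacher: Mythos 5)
Your plan is correct and follows essentially the same route as the paper: test both weak formulations with $u-h\in W^{1,p}_{d^*,T}(B_{2r},\Lambda^\ell)$ (Remark~\ref{rmk:weakform}), get \eqref{comparison0} from the constant-coefficient equation alone, and then control the monotonicity quantity by the coefficient-oscillation term and the $F$-term, using the higher integrability exponent $1+\sigma$ from \eqref{highestimate}, the $(\delta,R)$-vanishing condition, and \eqref{Fdelta1}, with the standard sub-quadratic upgrade when $1<p<2$. Your minor variants (Chebyshev level-set splitting of $|a-\overline a|$ instead of the paper's direct H\"older estimate using $a\in L^\infty$, and the H\"older-based upgrade instead of Young with a parameter $\epsilon_1$) are cosmetic and the exponent bookkeeping you flag goes through exactly as in the paper.
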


\begin{proof}
We first obtain \eqref{comparison0}. Since  $u-h \in  W^{1,p}_{d^*,T}(B_{2r},\Lambda^{\ell})$, we take $u-h$ as the test function in the weak formulations of \eqref{mainpb} and \eqref{eqcom}, see Remark~\ref{rmk:weakform}, to discover
\begin{equation}\label{lem:comparison_pf1}
\int_{B_{2r}} a(x) |du|^{p-2} \langle du , du -dh\rangle \,dx = \int_{B_{2r}}  \langle F , du -dh\rangle \,dx  
\end{equation}
and
\begin{equation}\label{lem:comparison_pf2}
\int_{B_{2r}} \overline a |dh|^{p-2} \langle dh , du -dh\rangle \,dx = 0  .
\end{equation}
We first notice from \eqref{lem:comparison_pf2} and Young's inequality that
$$\begin{aligned}
\int_{B_{2r}} |dh|^{p}  \,dx  & = \int_{B_{2r}} |dh|^{p-2} \langle dh , du \rangle \, dx  \\
&\le \int_{B_{2r}} |dh|^{p-1}  |du| \, dx   \le  \frac{p-1}{p}   \int_{B_{2r}} |dh|^{p}  \,dx +   \frac1p  \int_{B_{2r}} |du|^{p}  \,dx,
\end{aligned}$$
which implies \eqref{comparison0}.

We next obtain \eqref{comparison1}. Combining \eqref{lem:comparison_pf1} and \eqref{lem:comparison_pf2}, we obtain
\begin{equation}\label{lem:comparison_pf3}\begin{aligned}
I_5 & : = \fint_{B_{2r}} \overline a  \langle |du|^{p-2} du -|dh|^{p-2} dh , du -dh\rangle \,dx \\
&\ =  \fint_{B_{2r}} ( \overline a- a(x)) |du|^{p-2} \langle  du  , du -dh \rangle \,dx   + \fint_{B_{2r}}  \langle F , du -dh\rangle \,dx  =: I_6+I_7.
\end{aligned}\end{equation}
We estimate $I_5 \sim I_7$ separately.

Recall the elementary inequality
\[
( |a|^{p-2}a - |b|^{p-2}b)\cdot(a-b) \ge c(p,m)\, (|a|^2+|b|^2)^{\frac{p-2}{2}}|a-b|^2
\]
for any $a, b \in \mathbb{R}^m$ and any $p>1$.
Then for $I_5$, 
 we have 
$$
 \fint_{B_{2r}} (|du|^2 + |dh|^2)^{\frac{p-2}{2}} |du - dh|^2 \,dx \le c I_5.
$$
Hence if $p\ge 2$,
$$
\fint_{B_{2r}}  |du - dh|^p \,dx \le c I_5,
$$
and if $1<p<2$, by Young's inequality 
and \eqref{comparison1}, for every $\epsilon_1\in(0,1)$,
$$\begin{aligned}
& \fint_{B_{2r}}  |du - dh|^p \,dx  = \fint_{B_{2r}} (|du|^2 + |dh|^2)^{\frac{p(2-p)}{4}} (|du|^2 + |dh|^2)^{\frac{p(p-2)}{4}} |du - dh|^p \,dx \\
 &\qquad \le  \epsilon_1 \fint_{B_{2r}} (|du|^2 + |dh|^2)^{\frac{p}{2}} \, dx +c \epsilon_1^{-\frac{2-p}{p}} \fint_{B_{2r}}  (|du|^2 + |dh|^2)^{\frac{p-2}{2}} |du - dh|^2 \,dx \\
 &\qquad \le  c \epsilon_1 \lambda + c\epsilon_1^{-\frac{2-p}{p}} I_5.
\end{aligned}$$

For $I_6$,  by Young's inequality, H\"older's inequality and the inequality $0<\nu \le a \le L$, we have that for every $\epsilon_2\in (0,1)$,
$$\begin{aligned}
&|I_6| \le \fint_{B_{2r}}|a(x)-\overline a| |du|^{p-1} |du-dh|\, dx \\
& \le \epsilon_2   \fint_{B_{2r}} |du-dh|^p \, dx +  c\epsilon_2^{-\frac{1}{p-1}} \fint_{B_{2r}}|a(x)-\overline a|^\frac{p}{p-1} |du|^{p} \, dx \\
& \le \epsilon_2   \fint_{B_{2r}} |du-dh|^p \, dx +   c\epsilon_2^{-\frac{1}{p-1}} \left(\fint_{B_{2r}}|a(x)-\overline a|^{\frac{p(1+\sigma)}{(p-1)\sigma}}  \,dx \right)^{\frac{\sigma}{1+\sigma}} \left(\fint_{B_{2r}} |du|^{p(1+\sigma)} \, dx \right)^{\frac{1}{1+\sigma}}\\
& \le \epsilon_2   \fint_{B_{2r}} |du-dh|^p \, dx +  c \epsilon_2^{-\frac{1}{p-1}} \left(\fint_{B_{2r}}|a(x)-\overline a|  \,dx \right)^{\frac{\sigma}{1+\sigma}} \left(\fint_{B_{2r}} |du|^{p(1+\sigma)} \, dx \right)^{\frac{1}{1+\sigma}}.
\end{aligned}$$
Then applying the $(\delta, R)$-vanishing condition of $a$, \eqref{highestimate} and \eqref{Fdelta1}, we have
$$
|I_6| \le  \epsilon_2   \fint_{B_{2r}} |du-dh|^p \, dx +   c \epsilon_2^{-\frac{1}{p-1}} \delta^{\frac{\sigma}{1+\sigma}} (1+\delta_1)\lambda \le \epsilon_2   \fint_{B_{2r}} |du-dh|^p \, dx +   c \epsilon_2^{-\frac{1}{p-1}} \delta^{\frac{\sigma}{1+\sigma}} \lambda.
$$
For $I_7$,  by Young's inequality with \eqref{Fdelta1}, we have that for every $\epsilon_3\in(0,1)$,
$$
|I_7| \le \fint_{B_{2r}} |F| |du -dh| \,dx \le \epsilon_3 \fint_{B_{2r}} |du -dh|^p \,dx + c \epsilon_3^{-\frac{1}{p-1}} \delta_1\lambda.
$$
Therefore, inserting the resulting estimates for $I_5 \sim I_7$ into \eqref{lem:comparison_pf3}, we have that for every $\epsilon_1,\epsilon_2, \epsilon_3 \in(0,1)$,
$$\begin{aligned}
\fint_{B_{2r}} |du-dh|^p \, dx &\le c\epsilon_1^{-\frac{2-p}{p}} \epsilon_2   \fint_{B_{2r}} |du-dh|^p \, dx  +c\epsilon_1^{-\frac{2-p}{p}}  \epsilon_3 \fint_{B_{2r}} |du -dh|^p \,dx \\
& \qquad + \left( c\epsilon_1^{-\frac{2-p}{p}}   \epsilon_2^{-\frac{1}{p-1}} \delta^{\frac{\sigma}{1+\sigma}}  + c \epsilon_1^{-\frac{2-p}{p}}   \epsilon_3^{-\frac{1}{p-1}} \delta_1\right)\lambda +  c \epsilon_1 \lambda .
\end{aligned}$$
Consequently, choosing first $\epsilon_1$ such that  $c\epsilon_1=\frac{\varepsilon}{6}$, second  $\epsilon_2$ and $\epsilon_3$ such that  $c\epsilon_1^{-\frac{2-p}{p}} \epsilon_2 =c\epsilon_1^{-\frac{2-p}{p}}  \epsilon_3=\frac{1}{4}$, and third $\delta$ and $\delta_1$ such that $c\epsilon_1^{-\frac{2-p}{p}}   \epsilon_2^{-\frac{1}{p-1}} \delta^{\frac{\sigma}{1+\sigma}}  = c \epsilon_1^{-\frac{2-p}{p}}   \epsilon_3^{-\frac{1}{p-1}} \delta_1 =\frac{\varepsilon}{6}$, we obtain \eqref{comparison1}.

\end{proof}

\subsection{Proof of Theorem~\ref{mainthm}}\label{sec:proof}

We adopt the standard approach introduced by Acerbi and Mingione in \cite{AM07}. Nevertheless, we shall provide a detailed proof for completeness.

Let $0< r \le R/2$ and $B_{2r}\Subset\Omega$. We
define
\begin{equation}\label{def_lambda0}
\lambda_0 :=   \fint_{B_{2r}} |du|^{p} \, dx + \frac{1}{
\delta_1}\bigg(\fint_{B_{2r}} |F|^{\frac{p}{p-1}(1+\sigma)} \, dx \bigg)^{\frac{1}{1+\sigma}} \ge 0,
\end{equation} 
where $\sigma$ is given in Lemma~\ref{lem:high} and a small $\delta_1>0$ will be selected later. 
We denote the super-level set 
\[
E(\lambda, \rho ) := \{ y \in B_{\rho} : |du(y)|^p > \lambda  \}
\]
for $\lambda>0$ and $\rho >0$, where the center of $B_{\rho}$ is the same as the one of $B_{2r}$.

Now fix any $s_1, s_2$ with $1\le s_1 < s_2 \le 2$, and consider any $\lambda$ satisfying 
\begin{equation}\label{defA}
\lambda >  A\lambda_0 , 
\qquad  \text{where }\ A:=   \frac{20^n}{(s_2-s_1)^n}.
\end{equation}
For $y \in E(\lambda,s_1r)$ and $\rho \in \big[\frac{(s_2-s_1)r}{10}, (s_2-s_1)r\big]$, we derive that
$$\begin{aligned}
 \fint_{B_{\rho}(y)} |du|^p\,dx & + \frac{1}{
\delta_1}\left(\fint_{B_{\rho}(y)} |F|^{\frac{p}{p-1}(1+\sigma)} \, dx \right)^{\frac{1}{1+\sigma}} \\
& \le \frac{|B_{2r}|}{|B_{\rho}(y)|} \bigg[ \fint_{B_{2r}} |du|^p\,dx +\frac{1}{
\delta_1}\bigg(\fint_{B_{2r}} |F|^{\frac{p}{p-1}(1+\sigma)} \, dx\bigg)^{\frac{1}{1+\sigma}} \bigg]\\
& \le \bigg(\frac{2r}{\rho}\bigg)^n \lambda_0 \le  \frac{20^n  \lambda_0 }{(s_2-s_1)^n}  \le \lambda \end{aligned}$$
 from \eqref{def_lambda0}.
On the other hand, we apply the Lebesgue differentiation theorem to obtain that for almost every $y \in E(\lambda, s_1r)$
\[
\lim_{\rho\rightarrow 0 } \left[  \fint_{B_{\rho}(y)} |du|^p\,dx + \frac{1}{
\delta_1} \left(\fint_{B_{\rho}(y)} |F|^{\frac{p}{p-1}(1+\sigma)} \, dx \right)^{\frac{1}{1+\sigma}} \right] > \lambda.
\]
Then by the continuity of integral, we see that for almost every $y \in E(\lambda, s_1r)$,
there exists $\rho_y  \in (0, \frac{(s_2-s_1)r}{10})$ such that
\[
 \fint_{B_{\rho_y}(y)} |du|^p\,dx + \frac{1}{
\delta_1}\left(\fint_{B_{\rho_y}(y)} |F|^{\frac{p}{p-1}(1+\sigma)} \, dx \right)^{\frac{1}{1+\sigma}}  =\lambda
\]
and 
\[
 \fint_{B_{\rho}(y)} |du|^p\,dx + \frac{1}{
\delta_1}\left(\fint_{B_{\rho}(y)} |F|^{\frac{p}{p-1}(1+\sigma)} \, dx \right)^{\frac{1}{1+\sigma}} < \lambda
\]
for all $\rho \in (\rho_y, (s_2-s_1)r]$.

Therefore, for given $\lambda >  A \lambda_0$,
by virtue of Vitali covering lemma, 
 there is a disjoint collection of balls $\{ B_{\rho_i}(y^i) \}_{i=1}^{\infty}$ with $y^i \in E(\lambda, s_1r)$ and $\rho_i \in (0, \frac{(s_2-s_1)r}{20})$ such that 
 \begin{equation}\label{coverings}
E(\lambda, s_1) \setminus \mathcal{N} \subset \bigcup_{i=1}^{\infty} B_{5\rho_i}(y^i) \subset B_{2r},
\end{equation}
where $\mathcal{N}$ is a negligible set, 
\begin{equation}\label{eq_lambda}
 \fint_{B_{\rho_i}(y^i)} |du|^p\,dx + \frac{1}{
\delta_1} \left(\fint_{B_{\rho_i}(y^i)} |F|^{\frac{p}{p-1}(1+\sigma)} \, dx\right)^{\frac{1}{1+\sigma}} =\lambda
\end{equation}
and 
\[
 \fint_{B_{\rho}(y^i)} |du|^p\,dx + \frac{1}{
\delta_1} \left(\fint_{B_{\rho}(y^i)} |F|^{\frac{p}{p-1}(1+\sigma)} \, dx\right)^{\frac{1}{1+\sigma}} <\lambda
\]
for all $\rho \in (\rho_i, (s_2-s_1)r]$.

We first note that 
\[
 \fint_{B_{20\varrho_i}(y^i)} |du|^p\,dx  <\lambda  
\quad\text{and}\quad
 \left(\fint_{B_{20\varrho_i}(y^i)} |F|^{\frac{p}{p-1}(1+\sigma)} \, dx\right)^{\frac{1}{1+\sigma}} <\delta_1\lambda.
\]
Let $\varepsilon\in (0,1)$. Then applying Lemma~\ref{lem:comparision} there exist small $\delta_1>0$ depending on $n,\ell,p,\nu,\Lambda, q$ and $\varepsilon$  such that
\begin{equation}\label{comparisoni}
\fint_{B_{5\rho_i}(y^i)} |du -dh_i|^p \,dx \le \varepsilon \lambda,
\end{equation}
where $h_i \in W^{1,p}_{d^*,T}(B_{10\rho_i}(y^i))$ is the unique weak solution to 
\[\begin{cases}
d^*(\overline a |dh_i|^{p-2}d h_i) =0  & \text{in }\ B_{10\varrho_i}(y^i),\\
d^*h_i = d^* u =0  & \text{in }\ B_{10\varrho_i}(y^i),\\
\nu \wedge  h_i  = \nu \wedge u & \text{on } \ \partial B_{10\varrho_i}(y^i).
\end{cases}\]
Moreover, by Lemma~\ref{lem:locbdd_h}, we get
\begin{equation}\label{supi}
 \sup_{B_{5\rho_i}(y^i)} |d h_i |^p \le c\fint_{B_{10\rho_i}(y^i)}|dh_i|^p\,dx \le c_0\lambda
\end{equation}
for some $c_0=c_0(n,\ell,p) \ge 1 .$

Now we  write
\[ E_i(\tilde\lambda) := \{ x \in B_{\rho_i}(y^i) : |du(x)|^p > \tilde\lambda  \}
\]
and
\[
H_i(\tilde\lambda):=\{ x \in B_{\rho_i}(y^i) : |F(x)|^{\frac{p}{p-1}} > \tilde\lambda  \}
\]
for $\tilde\lambda>0$.
Then from \eqref{eq_lambda}, we see 
$$
\fint_{B_{\rho_i}(y^i)} |du|^p\,dx \ge \frac{\lambda}{2} 
\quad\text{or}\quad
\frac{1}{
\delta_1} \left(\fint_{B_{\rho_i}(y^i)} |F|^{\frac{p}{p-1}(1+\sigma)} \, dx\right)^{\frac{1}{1+\sigma}} \ge \frac{\lambda}{2},
$$
hence
 $$
|B_{\rho_i}(y^i)| \le \frac{2}{\lambda} \int_{B_{\rho_i}(y^i)} |du|^p\,dx \le \frac{2}{\lambda} \int_{E_i(\lambda/4)} |du|^p\,dx  + \frac{1}{2} |B_{\rho_i}(y^i)| 
$$
or
 $$\begin{aligned}
|B_{\rho_i}(y^i)| & \le \left(\frac{2}{\delta_1\lambda}\right)^{1+\sigma}\int_{B_{\rho}(y^i)} |F|^{\frac{p}{p-1}(1+\sigma)} \, dx \\
&\le \left(\frac{2}{\delta_1\lambda}\right)^{1+\sigma}\int_{H_i(\delta_1\lambda/4)}  |F|^{\frac{p}{p-1}(1+\sigma)} \, dx  + \frac{1}{2^{1+\sigma}} |B_{\rho_i}(y^i)|  
\end{aligned}$$
and in turn, we obtain that 
\begin{equation}\label{ineq_measure_Bi}
|B_{\rho_i}(y^i)| \le 4^{1+\sigma}\bigg(\frac{1}{\lambda}\int_{E_i(\lambda/4)} |du|^p\,dx +\frac{1}{(\delta_1\lambda)^{1+\sigma}}\int_{H_i(\delta_1\lambda/4)} |F|^{\frac{p}{p-1}(1+\sigma)} \, dx \bigg).
\end{equation}

Set 
\begin{equation}\label{defK}
K := 2^p c_0 >1,
\end{equation}
where the constant $c_0$ is from \eqref{supi}.
From \eqref{coverings}, we then have
\[
E(K\lambda, s_1r) \setminus \mathcal{N} \subset E(\lambda, s_1r) \setminus \mathcal{N}\subset \bigcup_{i=1}^{\infty} B_{5\rho_i}(y^i) \subset B_{s_2r}.
\]
For any $x \in B_{5\rho_i}(y^i)$ such that $|du(x)|^p>K\lambda$,
we infer from \eqref{supi} that
 $$\begin{aligned}
 |du(x)|^p &\le 2^{p-1} (|du(x)- dh_i(x)|^p + |dh_i(x)|^p) \\
 & \le 2^{p-1} |du(x)- dh_i(x)|^p +2^{p-1}c_0 \lambda \\
 & = 2^{p-1} |du(x)- dh_i(x)|^p +\frac{K\lambda}{2}\\  
 & <2^{p-1} |du(x)- dh_i(x)|^p  + \frac12   |du(x)|^p,
 \end{aligned}$$
which means 
 \[
 |du(x)|^p \le 2^p |du(x)- dh_i(x)|^p.
 \]
 Consequently, using the previous results, \eqref{comparisoni} and \eqref{ineq_measure_Bi} we obtain that 
\begin{equation}\label{est_result}
\begin{aligned}
\int_{E(K\lambda, s_1r)} |du|^p \,dx 
&\le \sum_{i} \int_{B_{5\rho_i}(y^i) \cap E(K\lambda, s_1r) } |du|^p \,dx  \\
& \le 2^p  \sum_{i} \int_{B_{5\rho_i}(y^i) \cap E(K\lambda, s_1r) } |du- dh_i|^p \,dx \\
& \le c \varepsilon \lambda \sum_{i}  |B_{\rho_i}(y^i)| \\
& \le c \varepsilon \bigg( \int_{B_{s_2r} \cap \{ |du|^p>\lambda/4\}} |du|^p\,dx\\
&\quad\qquad +\frac1{\delta_1^{1+\sigma}\lambda^\sigma} \int_{B_{s_2r} \cap \{ |F|^{\frac{p}{p-1}}>\delta_1\lambda/4\} } |F|^{\frac{p}{p-1}(1+\sigma)} \, dx\bigg).
\end{aligned}
\end{equation}

Let us define the truncated function of $|du|$ with respect to the level $k>0$ as
$$
|du|^p_k:=\min\{|du|^p,k\}.
$$
In addition, $k$ is assumed to be sufficiently large so that $k> K A \lambda$, where $A$ and $K$ are defined in \eqref{defA} and \eqref{defK}.
Now we estimate the following integral
$$
\int_{B_{s_1r}}|du|_k^{q-p}|du|^{p}\, dx.
$$
For the sake of simplicity, let us denote the super-level set with respect to $|du|_k$ by 
$$ E_{k}(K\lambda, s_1r):= \left\{ x \in B_{s_1r} : |du(x)|^p_k > K\lambda  \right\}.$$
We notice that when $k> K\lambda,$ $\{ |du|^p_k > K\lambda \} = \{ |du|^p>K\lambda\}$ and $\{ |du|^p_k > \lambda/4\} =\{ |du|^p > \lambda/4\}$. On the other hand,  when  $k\leq K\lambda$, it is clear that $\{ |du|^p_k > K\lambda \} =  \emptyset$. By \eqref{est_result}, it turns out that 
\begin{equation}\label{est_duEk}\begin{aligned}
 \int_{E_k(K\lambda, s_1r)} |du|^p \;dx 
  &\le  c \varepsilon  \bigg( \int_{B_{s_2r}\cap \{ |du|^p_k> \lambda/4 \}} |du|^p\,dx \\
&   \qquad \qquad + \frac{1}{\delta_1^{1+\sigma}\lambda^\sigma} \int_{B_{s_2r}\cap \{ |F|^{\frac{p}{p-1}} > \delta_1\lambda/4 \} } |F|^{\frac{p}{p-1}(1+\sigma)} \,dx \bigg).
\end{aligned}\end{equation}
We multiply both sides of \eqref{est_duEk} by $\lambda^{\frac{q-p}{p}-1}$ and integrate with respect to $\lambda$ over $(A\lambda_0,\infty)$ to discover
\begin{equation}\label{est_duEkm}
\begin{aligned}
 J_0 & := \int_{A\lambda_0}^{\infty} \lambda^{\frac{q-p}{p}-1}\int_{E_k(K\lambda, s_1 r)} |du|^p \;dxd\lambda \\
& \le  c\varepsilon  \int_{A\lambda_0}^{\infty} \lambda^{\frac{q-p}{p}-1} \bigg( \int_{B_{s_2r}\cap \{ |du|^p_k> \lambda/4 \}} |du|^p\,dx \bigg) d\lambda \\
&\qquad \quad + \frac{c \varepsilon}{\delta_1^{1+\sigma}}  \int_{A\lambda_0}^{\infty} \lambda^{\frac{q-p}{p}-1-\sigma} \bigg(
 \int_{B_{s_2r}\cap \{ |F|^{\frac{p}{p-1}} > \delta_1\lambda/4\} } |F|^{\frac{p}{p-1}(1+\sigma)} \,dx \bigg) d \lambda\\
 & \quad =: c \varepsilon J_1 +\frac{c \varepsilon}{\delta_1^{1+\sigma}} J_2.
\end{aligned}\end{equation}
By virtue of Fubini's theorem, we obtain that 
\begin{equation}\label{est_duEkI_0}
\begin{aligned}
 J_0 &:= \int_{A\lambda_0}^{\infty} \lambda^{\frac{q-p}{p}-1}\int_{E_k(K\lambda, s_1r)} |du|^p \;dxd\lambda\\
& = \int_{E_k(KA\lambda_0, s_1 r)} |du|^p \left[ \int_{A\lambda_0}^{|du|^p_k / K} \lambda^{\frac{q-p}{p}-1} \;d\lambda \right] dx \\
  &=  \frac{p}{q-p} \int_{E_k(KA\lambda_0, s_1r)} |du|^p \left( \frac{|du|_k^{q-p}}{K^{\frac{q-p}{p}}}- (A\lambda_0)^{\frac{q-p}{p}}\right) dx.
\end{aligned}\end{equation}
 We again use Fubini's theorem to derive that 
\begin{equation}\label{est_duEkI_1}
\begin{aligned}
 J_1&:=  \int_{A\lambda_0}^{\infty} \lambda^{\frac{q-p}{p}-1} \Bigg( \int_{B_{s_2r}\cap \{ |du|^p_k> \lambda/4\}} |du|^p\,dx \Bigg) d\lambda \\
&=  \int_{B_{s_2r}\cap \{ |du|^p_k> A\lambda_0 /4 \}} |du|^p \left( \int_{A\lambda_0}^{4|du|^p_k}  \lambda^{\frac{q-p}{p}-1}      d\lambda\right) dx\\
& =  \frac{p}{q-p}  \int_{B_{s_2r}\cap \{ |du|^p_k> A\lambda_0 /4 \}} |du|^p \left( 4^{\frac{q-p}{p}} |du|_k^{q-p} -[A\lambda_0]^{\frac{q-p}{p}} \right) dx\\
&\le  \frac{p\, 4^{\frac{q-p}{p}}}{q-p}\int_{B_{s_2r}} |du|^p |du|_k^{q-p} dx. 
 \end{aligned}\end{equation}
 Similarly, we infer that 
 \begin{equation}\label{est_duEkI_2}
\begin{aligned}
 J_2&:=  \int_{A\lambda_0}^{\infty} \lambda^{\frac{q-p}{p}-1-\sigma} \Bigg( \int_{B_{s_2r}\cap \{ |F|^{\frac{p}{p-1}} > \delta_1\lambda/4\}} |F|^{\frac{p}{p-1}(1+\sigma)} \,dx \Bigg) d\lambda \\
&=  \int_{B_{s_2r}\cap \{ |F|^{\frac{p}{p-1}}> \delta_1A\lambda_0 /4 \}}  |F|^{\frac{p}{p-1}(1+\sigma)} \left( \int_{A\lambda_0}^{4|F|^{\frac{p}{p-1}}/\delta_1}  \lambda^{\frac{q-p}{p}-1-\sigma}      d\lambda\right) dx\\
& =  \frac{1}{\frac{q-p}{p}-\sigma}  \int_{B_{s_2r}\cap \{ |F|^{\frac{p}{p-1}}> \delta_1A\lambda_0 /4 \}} |F|^{\frac{p}{p-1}(1+\sigma)}  \left[\Big(\frac{4|F|^{\frac{p}{p-1}}}{\delta_1}\Big)^{\frac{q-p}{p}-\sigma} -(A\lambda_0)^{\frac{q-p}{p}} \right] dx\\
&\le  \frac{c}{\delta_1^{\frac{q}{p}-1-\sigma}}\int_{B_{s_2r}} |F|^{\frac{q}{p-1}} dx. 
 \end{aligned}\end{equation}
  Inserting \eqref{est_duEkI_0}--\eqref{est_duEkI_2} into \eqref{est_duEkm}, we have that
   \begin{equation}\label{est_dudukEk}
\begin{aligned}
 \int_{E_k(KA\lambda_0, s_1r)} |du|^p |du|_k^{q-p}\;dx
 & \le  (KA\lambda_0)^{\frac{q-p}{p}} \int_{ B_{s_1r}}  |du|^p\; dx + c  \varepsilon\int_{B_{s_2r}} |du|^p |du|_k^{q-p} dx  \\
&\qquad  + c \delta_1^{-(\frac{q}{p}-1-\sigma)} \int_{B_{s_2r}}  |F|^{\frac{q}{p-1}}\; dx .
\end{aligned}\end{equation}

On the other hand, we see that 
 \begin{equation}\label{est_dudukcomp}
 \int_{B_{s_1r} \setminus E_k(KA\lambda_0, s_1r)} |du|^p |du|_k^{q-p}\;dx \leq (KA\lambda_0)^{\frac{q-p}{p}}    \int_{B_{s_1r}} |du|^p \;dx.
\end{equation}
Combining \eqref{est_dudukEk} and \eqref{est_dudukcomp}, it follows that 
$$\begin{aligned}
  & \int_{B_{s_1r}}  |du|^p |du|_k^{q-p}\;dx \\
  & \le  2(KA\lambda_0)^{\frac{q-p}{p}} \int_{ B_{s_1r}}  |du|^p\; dx 
+ c_*\varepsilon  \int_{B_{s_2r}} |du|^p |du|_k^{q-p} dx+ c \delta_1^{-(\frac{q}{p}-1-\sigma)} \int_{B_{s_2r}}  |F|^{\frac{q}{p-1}}\; dx 
\end{aligned}$$
for some $c_*=c_*(n,\ell,\nu, L, p,s)>0$.
At this stage, we ultimately select  $\varepsilon\in(0,1)$ such that $ c_*\varepsilon  \leq \frac12$, and hence $\delta_1$ is also finally determined as a constant depending on $n,\ell, \nu, L, p,s$. Recalling the definition of $A$ in \eqref{defA}, we conclude that
$$\begin{aligned}
 & \int_{B_{s_1r}}|du|_k^{q-p}|du|^{p}\, dx\\
 &\le \frac12 \int_{B_{s_2r}}|du|_k^{q-p} |du|^p\,dx +\frac{c\lambda_0^{\frac{q-p}{p}}}{(s_2-s_1)^{\frac{n(q-p)}{p}}}\int_{B_{2r}} |du|^{p}\, dx +c\int_{B_{2r}}|F|^{\frac{q}{p-1}}\,dx
\end{aligned}$$
for any $1\le s_1<s_2\le 2$, where the constants $c$ are independent of $s_1$ and $s_2$. Therefore Lemma \ref{teclem} below yields that
$$
 \int_{B_{r}}|du|_k^{q-p}|du|^{p}\, dx\leq c\lambda_0^{\frac{q-p}{p}}\int_{B_{2r}} |du|^{p}\, dx +c\int_{B_{2r}}|F|^{\frac{q}{p-1}}\,dx.
$$
Furthermore, we apply Lebesgue's monotone convergence theorem to get 
$$
 \fint_{B_{r}}|du|^q\, dx\leq c\lambda_0^{\frac{q-p}{p}}\fint_{B_{2r}} |du|^{p}\, dx +c\,\fint_{B_{2r}} |F|^{\frac{q}{p-1}}\,dx.
$$
Here, recalling the definition of $\lambda_0$ in \eqref{def_lambda0}, 
 we derive that  
\[ \begin{aligned}
 &\lambda_0^{\frac{q-p}{p}}\fint_{B_{2r}} |du|^{p}\, dx  \\
 &=  \left[ \fint_{B_{2r}} |du|^{p} \, dx + \frac{1}{
\delta_1}\left(\fint_{B_{2r}} |F|^{\frac{p}{p-1}(1+\sigma)} \, dx\right)^{\frac{1}{1+\sigma}} \right]^{\frac{q-p}{p}} \fint_{B_{2r}} |du|^{p}\, dx
\\
&
\le  \left( \fint_{B_{2r}} |du|^{p} \, dx \right)^{\frac{q}{p}}+ c \left(\fint_{B_{2r}} |F|^{\frac{p}{p-1}(1+\sigma)} \, dx \right)^{\frac{q-p}{p (1+\sigma)} \frac{q}{q-p}}  + c \left( \fint_{B_{2r}} |du|^{p}\, dx\right)^{\frac{q}{p}}\\
& \le c \left( \fint_{B_{2r}} |du|^{p} \, dx \right)^{\frac{q}{p}} +  c\fint_{B_{2r}} |F|^{\frac{q}{p-1}} \, dx,
\end{aligned}\]
by using Young's inequality and H\"older's inequality.
We finally obtain   
\[
 \fint_{B_{r}}|du|^q\, dx\leq c\left(\fint_{B_{2r}} |du|^{p}\, dx\right)^{\frac{q}{p}} +c\fint_{B_{2r}} |F|^{\frac{q}{p-1}}\,dx
\]
for some $c=c(n, \ell, \nu, L, p,s)>0.$ Therefore, we complete the proof of Theorem~\ref{mainthm}.

\begin{lem}\label{teclem} (see \cite{HL1})
Let $ g :[a,b] \to \mathbb{R} $ be a bounded nonnegative function. Suppose that for any $s_1,s_2$ with  $ 0< a \leq s_1 < s_2 \leq b $,
$$
g(s_1) \leq \tau g(s_2) + \frac{\alpha}{(s_2-s_1)^{\gamma}}+\beta,
$$
where $\alpha, \beta\geq 0, \gamma >0$ and $0\leq \tau <1$. Then we have
$$
g(s_1) \leq c\left(  \frac{\alpha}{(s_2-s_1)^{\beta}}+ \beta \right)
$$
for some constant $c=c(\gamma, \tau) >0$.
\end{lem}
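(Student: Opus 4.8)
The plan is to prove this by the standard iteration (``hole-filling'') argument: instead of comparing $g$ only at the single pair $(s_1,s_2)$, I would compare $g$ along a geometrically spaced sequence of intermediate radii tending to $s_2$, iterate, and sum the resulting geometric series, using $\tau<1$ together with the boundedness of $g$ to absorb the remainder.

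In detail, fix $s_1,s_2$ with $a\le s_1<s_2\le b$, choose a parameter $\kappa\in(0,1)$ to be specified, and set $t_0:=s_1$ and $t_{i+1}:=t_i+(1-\kappa)\kappa^i(s_2-s_1)$ for $i\ge 0$. Since $\sum_{i\ge 0}(1-\kappa)\kappa^i=1$, the sequence $(t_i)$ is strictly increasing, stays in $[s_1,s_2]$, converges to $s_2$, and satisfies $t_{i+1}-t_i=(1-\kappa)\kappa^i(s_2-s_1)$. Applying the hypothesis to the pair $(t_i,t_{i+1})$ in place of $(s_1,s_2)$ gives
$$
g(t_i)\le \tau\, g(t_{i+1})+\frac{\alpha}{(1-\kappa)^\gamma\kappa^{i\gamma}(s_2-s_1)^\gamma}+\beta ,
$$
and iterating this $N$ times yields
$$
g(s_1)=g(t_0)\le \tau^N g(t_N)+\sum_{i=0}^{N-1}\tau^i\left[\frac{\alpha}{(1-\kappa)^\gamma\kappa^{i\gamma}(s_2-s_1)^\gamma}+\beta\right].
$$

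Next I would fix $\kappa$: since $0\le\tau<1$, one may pick $\kappa\in(\tau^{1/\gamma},1)$, so that $\vartheta:=\tau\kappa^{-\gamma}<1$; then both $\sum_{i\ge0}(\tau\kappa^{-\gamma})^i=(1-\vartheta)^{-1}$ and $\sum_{i\ge0}\tau^i=(1-\tau)^{-1}$ are finite. Because $g$ is bounded on $[a,b]$, we have $\tau^N g(t_N)\to 0$ as $N\to\infty$, so letting $N\to\infty$ in the previous display gives
$$
g(s_1)\le \frac{\alpha}{(1-\kappa)^\gamma(1-\vartheta)(s_2-s_1)^\gamma}+\frac{\beta}{1-\tau}\le c(\gamma,\tau)\left(\frac{\alpha}{(s_2-s_1)^\gamma}+\beta\right),
$$
where the constant absorbs the now-fixed $\kappa=\kappa(\gamma,\tau)$; this is the asserted estimate. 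There is no genuine obstacle in the argument: the only point requiring a moment's thought is choosing $\kappa$ close enough to $1$ that the geometric factor $\tau\kappa^{-\gamma}$ stays below $1$, and noticing that the boundedness hypothesis on $g$ is precisely what makes the term $\tau^N g(t_N)$ vanish in the limit.
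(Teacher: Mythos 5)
Your iteration argument is correct and is essentially the standard proof of this lemma given in the cited reference \cite{HL1} (the paper itself offers no proof, only the citation); the key point, choosing $\kappa\in(\tau^{1/\gamma},1)$ so that $\tau\kappa^{-\gamma}<1$ and then using boundedness of $g$ to kill $\tau^N g(t_N)$, is handled correctly. Note that you prove the intended conclusion with exponent $\gamma$, i.e.\ $g(s_1)\le c\bigl(\alpha(s_2-s_1)^{-\gamma}+\beta\bigr)$, which is what the statement means despite the typo $(s_2-s_1)^{\beta}$ in its conclusion.
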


Finally, we prove Corollary~\ref{cor1}.

\begin{proof}[Proof of Corollary~\ref{cor1}]
We first consider the case that $r=1$ and the center of balls is the origin. Then we observe from \cite[Theorem 13]{Sil17} that for every $\tilde f\in L^\gamma(B_2,\Lambda^\ell)$ with $d^*\tilde f=0$ in the distribution sense, there exists a unique weak solution $\tilde \theta\in W^{2,\gamma}(B_{2},\Lambda^\ell)$ to the equation
$$
\begin{cases}
d^* (d\tilde \theta) = \tilde f & \quad \text{in }\ B_{2},\\
\tilde \theta =0 & \quad \text{on }\ \partial B_{2},
\end{cases}
$$
with the estimate
$$
\|\theta\|_{W^{2,\gamma}(B_{2},\Lambda^\ell)} \le c \|\tilde f\|_{L^\gamma (B_{2},\Lambda^\ell)},
$$
where $c>0$ depends on $n,\ell$ and $\gamma$. 
Note that when applying   \cite[Theorem 13]{Sil17}, we consider that the ball $B_{2}$ is contractible, see \cite[Remark 17 (ii)]{Sil17} for details.  Set $\tilde F =d\tilde \theta$. Then by the Sobolev embedding, we have
$$
\left(\fint_{B_{2}}|\tilde F|^{\gamma^*}\, dx \right)^{\frac{1}{\gamma^*}}\le \left(\fint_{B_{2r}}|\nabla \tilde \theta |^{\gamma^*}\, dx \right)^{\frac{1}{\gamma^*}}  \le c \left(\fint_{B_{2}}|\tilde f|^{\gamma}\,dx\right)^{\frac{1}{\gamma}}
$$
for some $c>0$ depending on $n,\ell$ and $\gamma$, where $\gamma^*=\frac{n\gamma}{n-\gamma}$. 

Let us return to the setting of the corollary, fix $B_{2r}=B_{2r}(x_0)\Subset \Omega$ with $2r\le R$. Put $\tilde f(x)= r^2f(r(x-x_0))$,  $\tilde \theta(x)=\theta (r(x-x_0))$ and $F:= d \theta $. Then we have that $d^* F =f$ and  
$$
\left(\fint_{B_{2r}}| F|^{\gamma^*}\, dx \right)^{\frac{1}{\gamma^*}}  \le c \left(\fint_{B_{2r}}|r f|^{\gamma}\,dx\right)^{\frac{1}{\gamma}}.
$$
Therefore, by Theorem~\ref{mainthm} with $q=(p-1)\gamma^*$, we complete the proof.
\end{proof}

\section*{Acknowledgement}
M. Lee  was supported by NRF grant funded by MSIT (NRF-2022R1F1A1063032). J. Ok was supported by NRF grant funded by MSIT (NRF-2022R1C1C1004523).
J. Pyo was supported by NRF grant funded by MSIT (NRF-2020R1A2C1A01005698 and NRF-2021R1A4A1032418).

\subsection*{Data availability}
No data was used for the research described in the article.

\end{document}